\let\OLDthebibliography\thebibliography
\renewcommand\thebibliography[1]{
  \OLDthebibliography{#1}
  \setlength{\parskip}{0pt}
  \setlength{\itemsep}{0pt plus 0.3ex} }
\numberwithin{equation}{section}
\theoremstyle{plain}
\newtheorem{theorem}{Theorem}[section]
\newtheorem{lemma}[theorem]{Lemma}
\newtheorem{corollary}[theorem]{Corollary}
\theoremstyle{definition}
\newenvironment{remark}{\pushQED{\qed}\remarkbase}{\popQED\endremarkbase}
\def\notina[#1]#2{\begingroup\def\thefootnote{\fnsymbol{footnote}}\footnote[#1]{#2}\endgroup}
\newcommand{\N}{{\mathbb N}}
\newcommand{\R}{{\mathbb R}}
\newcommand{\C}{{\mathbb C}}
\newcommand{\Z}{\mathbb Z}
\newcommand{\T}{{\mathbb T}}
\newcommand{\mB}{\mathcal{B}}
\newcommand{\mD}{\mathcal{D}}
\newcommand{\mF}{\mathcal{F}}
\newcommand{\mK}{\mathcal{K}}
\newcommand{\mR}{\mathcal{R}}
\newcommand{\mP}{\mathcal{P}}
\newcommand{\mM}{\mathcal{M}}
\renewcommand{\a}{\alpha}
\renewcommand{\b}{\beta}
\renewcommand{\d}{\delta}
\newcommand{\e}{\varepsilon}
\newcommand{\ph}{\varphi}
\newcommand{\Om}{\Omega}
\renewcommand{\th}{\vartheta}
\renewcommand{\Re}{\mathrm{Re}\,}
\newcommand{\sign}{\mathrm{sign}}
\newcommand{\gr}{\nabla}
\newcommand{\la}{\langle}
\newcommand{\ra}{\rangle}
\newcommand{\pa}{\partial}
\newcommand{\Lm}{\Lambda}
\newcommand{\baru}{\overline{u}}
\newcommand{\be}{\begin{equation}}
\newcommand{\ee}{\end{equation}}
\newcommand{\bal}{\begin{align}} 
\newcommand{\eal}{\end{align}} 
\title{\Large{\textbf{
On the 
existence time for the Kirchhoff equation with periodic boundary conditions}}}
\author{\small{Pietro Baldi and Emanuele Haus}} 
\date{} 
\begin{document}

\maketitle

\noindent
\emph{Abstract}.\  We consider the Cauchy problem for the Kirchhoff equation on $\T^d$
with initial data of small amplitude $\e$ in Sobolev class. 
We prove a lower bound $\e^{-4}$ for the existence time, 
which improves the bound $\e^{-2}$ given by the standard local theory. 
The proof relies on a normal form transformation, 
preceded by a nonlinear transformation that diagonalizes the operator 
at the highest order, 
which is needed because of the quasilinear nature of the equation.

\medskip

\noindent
\emph{Keywords}.\ 
Kirchhoff equation, quasilinear wave equations, 
Cauchy problems, normal forms, quasilinear normal forms.

\noindent
\emph{MSC2010}:  
35L72, 
35L15, 
35Q74, 
37J40, 
70K45. 


\bigskip
\bigskip

\noindent
\emph{Contents.} 
\ref{sec:1} Introduction ---
\ref{sec:Lin} Linear transformations ---
\ref{sec:Bubaru} Diagonalization of the order one ---
\ref{sec:NF} Normal form transformation ---
\ref{sec:proof} Proof of Theorem \ref{thm:main}.

\section{Introduction}  
\label{sec:1}

This paper deals with an old open problem, concerning the global wellposedness 
of the Kirchhoff equation 
\begin{equation} \label{K Om}
\pa_{tt} u - \Big( 1 + \int_{\Om} |\gr u|^2 \, dx \Big) \Delta u = 0
\end{equation}
with periodic boundary conditions $\Om = \T^d$ 
or Dirichlet boundary conditions $u|_{\pa \Om} = 0$ 
on a bounded domain $\Om \subset \R^d$. 
In 1940 Bernstein \cite{Bernstein 1940} proved that, in the 1-dimensional case
$\Om = [0,\pi]$ with zero boundary conditions $u(t,0) = u(t,\pi) = 0$, 
the Cauchy problem for \eqref{K Om} with initial data 
\begin{equation} \label{init data Om}
u(0,x) = \a(x), \quad 
\pa_t u(0,x) = \b(x)
\end{equation}
is globally wellposed for $(\a,\b)$ analytic, 
and locally wellposed for $(\a,\b)$ in the Sobolev space $H^2 \times H^1$.  
Later on, these results have been extended to higher dimension, 
also including the periodic setting $\Om = \T^d$, 
proving global wellposedness in larger spaces containing the analytic functions, 
and local wellposedness in the Sobolev space $H^{\frac32} \times H^{\frac12}$, 
with existence time $T \sim (\| \a \|_{\frac32} + \| \b \|_{\frac12})^{-2}$, 
see Section \ref{sec:literature} for a short overview.
Nonetheless, the basic question about the existence time for the Cauchy problem 
with $C^\infty$ data, even of small amplitude, is still open.  
In particular, it is still not known whether the maximal existence time is finite or infinite 
(notice that the quasilinear wave equation $u_{tt} - (1 + u_x^2) u_{xx} = 0$ on the circle $\T$, 
which looks like \eqref{K1} in one dimension without the integral sign,
has a finite blowup time $T \sim (\| \a \|_{C^2} + \| \b \|_{C^1})^{-2}$, 
as proved by Klainerman and Majda \cite{Klainerman Majda 1980}).

In this paper we prove that in the periodic setting $\Om = \T^d$, $d \geq 1$, 
for small amplitude initial data 
$(\a,\b) \in H^{\frac32} \times H^{\frac12}$ if $d = 1$, 
or $(\a,\b) \in H^2 \times H^1$ if $d \geq 2$, 
the existence time is at least $T \sim (\| \a \| + \| \b \|)^{-4}$
(Theorem \ref{thm:main}),
which is longer than the time $(\| \a \| + \| \b \|)^{-2}$ 
provided by the classical local theory.  
The same result also holds in the case of zero Dirichlet boundary conditions 
on the cube $\Om = [0,\pi]^d$ (Remark \ref{rem:Dirichlet}).
To give a precise statement of our main result, we first introduce some notation.

\bigskip


On the torus $\T^d$, it is not restrictive to set the problem in the space of functions with zero average in space, for the following reason.
Given initial data $\a(x), \b(x)$, we split both them and the unknown $u(t,x)$ into the sum of a zero-mean function and the average term, 
\[
\a(x) = \a_0 + \tilde \a(x), \quad 
\b(x) = \b_0 + \tilde \b(x), \quad
u(t,x) = u_0(t) + \tilde u(t,x),
\]
where 
\[
\int_{\T^d} \tilde\a(x) \, dx = 0, \quad
\int_{\T^d} \tilde\b(x) \, dx = 0, \quad
\int_{\T^d} \tilde u(t,x) \, dx = 0 \quad \forall t.
\]
Then the Cauchy problem 
\begin{equation} \label{K1}
\pa_{tt} u -  \Big( 1 + \int_{\T^d} |\gr u|^2 \, dx \Big) \Delta u = 0, \quad 
u(0,x) = \a(x), \quad 
\pa_t u(0,x) = \b(x)
\end{equation}
splits into two distinct, uncoupled Cauchy problems: 
one is the problem for the average $u_0(t)$, which is
\[
u_0''(t) = 0, \quad u_0(0) = \a_0, \quad u_0'(0) = \b_0
\]
and has the unique solution $u_0(t) = \a_0 + \b_0 t$; 
the other one is the problem for the zero-mean component $\tilde u(t,x)$, which is 
\[
\pa_{tt} \tilde u - \Big( 1 + \int_{\T^d} |\gr \tilde u|^2 \, dx \Big) \Delta \tilde u = 0, 
\quad
\tilde u(0,x) = \tilde \a(x), \quad
\pa_t \tilde u(0,x) = \tilde \b(x).
\]
Thus one has to study the Cauchy problem for the zero-mean unknown $\tilde u(t,x)$ 
with zero-mean initial data $\tilde \a(x), \tilde \b(x)$; 
this means to study \eqref{K1} in the class of functions with zero average in $x$.

For any real $s \geq 0$, we consider the Sobolev space of zero-mean functions
\begin{align} \label{def:Hs}
H^s_0(\T^d,\C) & := \Big\{ u(x) = \sum_{j \in \Z^d \setminus \{ 0 \} } u_j e^{ij\cdot x} : 
u_j \in \C, \ \| u \|_s < \infty \Big\}, 
\\
\| u \|_s^2 & := \sum_{j \in \Z^d \setminus \{ 0 \}} |u_j|^2 |j|^{2s},
\notag
\end{align}
and its subspace 
\begin{equation*} 
H^s_0(\T^d,\R) := \{ u \in H^s_0(\T^d,\C) : u(x) \in \R \}
\end{equation*}
of real-valued functions. 
For $s=0$, we write $L^2_0$ instead of $H^0_0$ the space of square-integrable functions with zero average.

The main result of the paper is the following theorem. 

\begin{theorem} \label{thm:main}
For $d \in \N$, let
\begin{equation} \label{def m0}
m_0 = 1 \quad \text{if} \ \ d = 1, 
\qquad 
m_0 = \frac32 \quad \text{if} \ \ d \geq 2.
\end{equation}
There exist universal constants $\e_0, C, C_1 > 0$ with the following properties. 

If $(\a, \b) \in H^{m_0 + \frac12}_0(\T^d,\R) \times H^{m_0 - \frac12}_0(\T^d,\R)$
with 
\[
\e := \| \a \|_{m_0 + \frac12} + \| \b \|_{m_0 - \frac12} \leq \e_0, 
\]
then the Cauchy problem \eqref{K1} has a unique solution 
$u \in C^0([0,T], H^{m_0 + \frac12}_0(\T^d,\R)) \cap C^1([0,T], H^{m_0 - \frac12}_0(\T^d,\R))$ 
on the time interval $[0,T]$, where 
\[
T = \frac{C_1}{\e^4},
\]
and 
\[
\max_{t \in [0,T]} ( \| u(t) \|_{m_0 + \frac12} + \| \pa_t u(t) \|_{m_0 - \frac12} ) 
\leq C \e.
\]
If, in addition, $(\a,\b) \in H^{s + \frac12}_0(\T,\R) \times H^{s - \frac12}_0(\T,\R)$
for some $s \geq m_0$, then $u$ belongs to 
$C^0([0,T], H^{s + \frac12}_0(\T^d,\R)) \cap C^1([0,T], H^{s - \frac12}_0(\T^d,\R))$, with
\begin{equation} \label{SUR} 
\max_{t \in [0,T]} ( \| u(t) \|_{s + \frac12} + \| \pa_t u(t) \|_{s - \frac12} ) 
\leq C ( \| \a \|_{s + \frac12} + \| \b \|_{s - \frac12} ).
\end{equation}
\end{theorem}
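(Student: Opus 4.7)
The strategy unfolds in three steps, paralleling Sections \ref{sec:Bubaru}--\ref{sec:proof}.

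\emph{Step 1: diagonalization of the principal part.} Setting $a(t) := 1 + \| \gr u(t) \|_{L^2}^2 \geq 1$, equation \eqref{K1} becomes the scalar wave equation $\pa_{tt} u - a(t) \D u = 0$ whose coefficient depends on $u$ only nonlocally. Introducing $\Lambda := \sqrt{-\D}$ on zero-mean functions and passing to the complex variable
\[
w := \pa_t u + i \sqrt{a(t)}\,\Lambda u,
\]
a direct computation yields
\[
\pa_t w = i\sqrt{a(t)}\,\Lambda w + \frac{a'(t)}{4 a(t)}\,(w - \overline{w}),
\]
so that the principal symbol is diagonal of order one, with the off-diagonal coupling to $\overline{w}$ pushed to order zero. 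Since $a(t)-1$ is quadratic in $w$, the schematic form is $\pa_t w = i \Lambda w + (\text{cubic, order }1) + (\text{cubic, order }0)$, with the crucial property that the cubic coefficients are \emph{scalar}: they depend on $u$ only through $\|\gr u\|_{L^2}^2$, which is a linear invariant of the flow. This scalar nature, inherited from the very special form of the Kirchhoff nonlinearity, is what allows the subsequent normal form to avoid any loss of derivatives.

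\emph{Step 2: Birkhoff normal form.} Apply a near-identity nonlinear change of variable $w = z + B(z,\overline{z})$, with $B$ quadratic and bounded on each $H^s_0$, conjugating the equation of Step 1 into
\[
\pa_t z = i\, \mM(|z|^2)\,\Lambda z + R(z, \overline{z}),
\]
where $\mM(|z|^2)$ is a real Fourier multiplier depending only on the actions $\{|z_k|^2\}_k$ and $R$ is a remainder of quintic-or-higher order in $z,\overline z$. Because the cubic terms issuing from Step 1 carry only the scalar coefficient $\|\gr u\|_{L^2}^2 = \sum_j |j|^2 |u_j|^2$, and because the dispersion relation $\om_j = |j|$ separates the $(\pm)$ frequencies, the cohomological equation determining $B$ is explicitly solvable with no small-divisor issue: many of the cubic contributions are in fact already in normal form, being functions of the actions alone, and are absorbed into the modified frequency $\mM(|z|^2)$; only the genuinely non-resonant part is killed by $B$.

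\emph{Step 3: energy estimate and conclusion.} Since $\mM(|z|^2)$ is real and acts as a Fourier multiplier, the leading term $i\,\mM(|z|^2)\Lambda z$ preserves every Sobolev norm of $z$. Hence at level $s = m_0 + \frac12$ one obtains
\[
\frac{d}{dt}\,\| z(t) \|_{m_0 + \frac12}^2 \,\lesssim\, \| z(t) \|_{m_0 + \frac12}^6,
\]
reflecting the quintic nature of $R$; Gronwall then yields a lifespan $T \gtrsim \|z(0)\|^{-4}$. As the composition $(u,\pa_t u) \mapsto w \mapsto z$ is close to the identity for data of small Sobolev norm, $\|z(0)\| \lesssim \e$, so reversing the transformations gives the claimed bound $T \gtrsim \e^{-4}$ and the a priori control $\max_{[0,T]}(\|u\|_{m_0+\frac12} + \|\pa_t u\|_{m_0-\frac12}) \lesssim \e$. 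The propagation of higher regularity \eqref{SUR} follows from tame bounds on each of the two transformations, which hold on $H^s_0$ for every admissible $s \geq m_0$.

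The main obstacle is the quasilinear character of the Kirchhoff nonlinearity: a direct Birkhoff normal form on \eqref{K1} would require dividing by $\Lambda$ in the cohomological equation and thereby lose derivatives. Step 1 is devoted precisely to rearranging the equation so that at the top order only scalar time-dependent multipliers remain, decoupling the solvability of the cohomological equation from the control of high derivatives. Balancing the tame estimates on the nonlinear change of variables against the regularity consumed in the normal form step is the delicate technical point of the argument.
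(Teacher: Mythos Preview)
Your three-step strategy matches the paper's architecture, and your Step~1 variable $w = \pa_t u + i\sqrt{a(t)}\,\Lambda u$ is an elegant shortcut that accomplishes in one stroke what the paper does via the composition $\Phi^{(1)}\circ\Phi^{(2)}\circ\Phi^{(3)}$. However, Step~2 contains a genuine error.

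You assert that the cohomological equation for $B$ is ``explicitly solvable with no small-divisor issue.'' This is false in dimension $d\ge 2$, and it is precisely the point that fixes the value of $m_0$ in the statement you are proving. The off-diagonal cubic term to be removed has Fourier structure $\sum_{j,k}|j|^2 w_j w_{-j}\,\bar z_k\, e^{ik\cdot x}$, and solving the cohomological equation produces coefficients with denominators $|j|-|k|$ (see \eqref{fix a12 c12}--\eqref{fix C12}). For $d=1$ one has $\big||j|-|k|\big|\ge 1$ whenever $|j|\ne|k|$, and the normal-form operator is bounded with $m_0=1$; but for $d\ge 2$ the differences $\big|\sqrt{j_1^2+\cdots+j_d^2}-\sqrt{k_1^2+\cdots+k_d^2}\big|$ accumulate to zero, and the best available bound is $\big||j|-|k|\big|^{-1}\lesssim |j|+|k|$, which costs one derivative in the coefficient and forces $m_0=\tfrac32$. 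Your sketch never touches this, so it cannot explain why the theorem is stated with \eqref{def m0} rather than $m_0=1$ uniformly.

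Two further points are passed over too quickly. First, the resonant cubic terms (those with $|j|=|k|$) survive the normal form; you fold them into a real multiplier $\mM(|z|^2)$, but in fact they are genuinely off-diagonal---they couple $z_k$ to $\bar z_k$---and one must verify by an explicit symmetry argument that their contribution to $\pa_t\|z\|_s^2$ vanishes (this is \eqref{below is here} in the paper, and it is not automatic). Second, after the normal form you must check that the transformed vector field stays bounded at quintic and higher order: the commutator of $B$ with the order-one diagonal term $i\sqrt{a}\,\Lambda$ a priori reintroduces an unbounded piece, and cancelling it requires the anti-commutator identity \eqref{anticommu MD} and the algebraic rearrangement \eqref{2nd 3rd after magic}--\eqref{5th after magic}. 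Your phrase ``tame bounds on each of the two transformations'' does not cover this, since in a quasilinear problem boundedness of the transformation does not by itself yield boundedness of the conjugated vector field. (Minor slip: the correction $B$ must be cubic, not quadratic, to kill cubic terms in the equation.)
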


\begin{remark}[Evolution of higher norms]
\label{rem:Evo}
The constant $C$ in \eqref{SUR} does not depend on $s$. 
This unusual property is a consequence of the special structure of the Kirchhoff equation: 
if $u$ is a solution of \eqref{K Om}, 
then $u$ also solves the linear wave equation with time-dependent coefficient
$\pa_{tt} u - a(t) \Delta u = 0$, with $a(t) = 1 + \int_{\Om} |\gr u|^2 \, dx$, 
and therefore $v := |D_x|^s u$ also solves $\pa_{tt} v - a(t) \Delta v = 0$.
\end{remark}

\begin{remark}[Why $m_0$ in \eqref{def m0} is different in dimension $d=1$ and $d \geq 2$]
\label{rem:Why m0}
The proof of Theorem \ref{thm:main} is based on a normal form transformation.
In the construction of such a normal form, one encounters the differences 
of the linear eigenvalues $|j|$, $j \in \Z^d$, 
as denominators of the transformation coefficients (see \eqref{fix A12}-\eqref{fix C12}).   
On the 1-dimensional torus $\T$, the difference 
$||j|-|k||$ is either zero or $\geq 1$, 
while on 
$\T^d$, $d \geq 2$, 
the differences $||j|-|k|| = | \sqrt{j_1^2 + \ldots + j_d^2} - \sqrt{k_1^2 + \ldots + k_d^2}|$ 
accumulate to zero, with lower bounds $||j|-|k|| \geq \frac{1}{|j| + |k|}$. 
This is the reason for the different regularity threshold we obtain in dimension 1 or higher. 
\end{remark}

\begin{remark}[Dirichlet boundary conditions on the cube] 
\label{rem:Dirichlet}
Theorem \ref{thm:main} im\-me\-di\-ate\-ly implies a similar result for the Cauchy problem 
with zero Dirichlet boundary conditions on the cube $\Om := [0,\pi]^d$. 
Given any function $u : \Om \to \R$, 
let 
\[
U : [-\pi, \pi]^d \to \R, \quad  
U(x) := \sign(x_1 x_2 \cdots x_d) u(|x_1|, \ldots, |x_d|)
\]
be its extension by odd reflection, 
and let $u_{ext} : \T^d \to \R$ be the periodic extension of $U$.

A function $u$ belongs to $H^s(\Om)$, $s = 1$ or $s = 2$ 
(i.e.\ the weak partial derivatives of order $\leq s$ belong to $L^2(\Om)$)
with Dirichlet boundary condition $u = 0$ on the boundary $\pa \Om$ 
if and only if (see, e.g., \cite{Fu}, \cite{Arosio Panizzi 1996}) 
$u$ belongs to the domain $V_s(\Om)$ of the fractional Laplacian $(-\Delta)^{s/2}$ 
on $\Om$ with zero Dirichlet boundary conditions (a spectrally defined Sobolev space). 
In such a case, the extension $u_{ext}$ belongs to the Sobolev space $H^s_0(\T^d)$ 
defined in \eqref{def:Hs}. 
Hence, for initial data $\a \in H^2(\Om)$, $\b \in H^1(\Om)$ 
with $\a = \b = 0$ on $\pa \Om$, one consider the periodic odd extensions 
$\a_{ext} \in H^2_0(\T^d)$, $\b_{ext} \in H^1_0(\T^d)$, 
and Theorem \ref{thm:main} applies. 

In dimension $d = 1$, Theorem \ref{thm:main} requires less regularity,  
and it is sufficient that $\a \in V_{\frac32}(0,\pi)$ and $\b \in V_{\frac12}(0,\pi)$. 
One has $\a \in V_{\frac32}(0,\pi)$ if and only if $\a$ belongs to the fractional Sobolev space 
$H^{\frac32}(0,\pi)$ on the interval, with $\a(0) = \a(\pi) = 0$, 
while $\b \in V_{\frac12}(0,\pi)$ if and only if $\b \in H^{\frac12}(0,\pi)$ with
$\int_0^\pi \frac{|\b(x)|^2}{x(\pi-x)} \, dx < \infty$
(see \cite{Fu}, \cite{Arosio Panizzi 1996}).
\end{remark}

\subsection{Strategy of the proof}

Since the problem is set on the torus $\T^d$, which is a compact manifold, 
no dispersive estimates are available to study the long-time dynamics, 
and the main point is the analysis of the resonances, 
for which the key tool is the normal form theory.

The main difficulty in the application of the normal form theory to the Kirchhoff equation
is due to the fact that it is a \emph{quasilinear} PDE.
Let us explain this point in more detail. 
The Kirchhoff equation has the Hamiltonian structure
\be \label{p1}
\begin{cases} 
\pa_t u = \gr_v H(u,v) = v, \\ 
\pa_t v = - \gr_u H(u,v) = \Big( 1 + \int_{\T^d} |\gr u|^2 dx \Big) \Delta u,
\end{cases}
\ee
where the Hamiltonian is
\be \label{K2} 
H(u,v) = \frac12 \int_{\T^d} v^2 dx 
+ \frac12 \int_{\T^d} |\gr u|^2 dx 
+ \Big( \frac12 \int_{\T^d} |\gr u|^2 dx \Big)^2,
\ee
and $\gr_u H$, $\gr_v H$ are the gradients with respect to the real scalar product 
\begin{equation} \label{lara}
\la f,g \ra := \int_{\T^d} f(x) g(x) \, dx \quad \forall f,g \in L^2(\T^d, \R),
\end{equation}
namely $H'(u,v)[f,g] = \la \gr_u H(u,v) , f \ra + \la \gr_v H(u,v) , g \ra$
for all $u,v,f,g$.
As a consequence, 
the first natural attempt is trying to construct the Birkhoff normal form,
using close-to-identity, symplectic transformations 
that are the time one flow of auxiliary Hamiltonians, 
with the goal of removing the nonresonant terms from the Hamiltonian \eqref{K2},
proceeding step by step with respect to the homogeneity orders. 
When one calculates (at least formally) the first step of this procedure, 
one finds a transformation $\Phi$ that is 
bounded on a ball of $H^s(\T^d,\R) \times H^{s-1}(\T^d,\R)$ around the origin, 
but it is not close to the identity as a bounded operator, 
in the sense that $\| \Phi(u,v) - (u,v) \|_{H^s \times H^{s-1}}$ 
is not $\lesssim \| (u,v) \|_{H^s \times H^{s-1}}^3$, 
as one needs for the application of the Birkhoff normal form method. 
Hence the transformed Hamiltonian $H(\Phi(u,v))$ cannot be Taylor expanded
in homogeneous orders without paying a loss of derivative, 
and the Birkhoff normal form procedure fails. 
This is ultimately a consequence of the quasilinear nature of the Kirchhoff equation.
Also, even working with more general close-to-identity transformations of vector fields, 
not necessarily preserving the Hamiltonian structure, 
the direct application of the Poincar\'e normal form procedure
encounters the same obstacle.


Thus, one has to look at the equation more carefully, 
distinguishing some terms that are harmless 
and some other terms that are responsible for the failure of the normal form construction.
To this aim, it is convenient to introduce symmetrized complex coordinates (see Section \ref{sec:Lin}), 
so that the linear wave operator becomes diagonal, 
and system \eqref{p1} becomes 
(see \eqref{syst uv})
\begin{equation} \label{u baru}
\begin{cases}
\pa_t u = - i \Lm u 
- \frac{i}{4} \la \Lm (u + \overline{u}) , u + \overline{u} \ra \Lm (u + \overline{u}),
\\
\pa_t \overline{u} = i \Lm \overline{u} 
+ \frac{i}{4} \la \Lm (u + \overline{u}) , u + \overline{u} \ra \Lm (u + \overline{u}),
\end{cases}
\end{equation}
where $\overline{u}$ is the complex conjugate of $u$, 
$\Lm := |D_x|$ is the Fourier multiplier of symbol $|\xi|$,
and $\la f , g \ra := \int_{\T^d} f(x) g(x) \, dx$ is the same as in \eqref{lara}, 
even for complex-valued functions $f,g$. 
We note that the cubic nonlinearity in \eqref{u baru} 
already has a ``paralinear'' structure, in the sense that, 
for all functions $u,v,h$, all $s \geq 0$, one has 
\[
\| \la \Lm u,v \ra \Lm h \|_s = | \la \Lm u,v \ra | \, \| h \|_{s+1} 
\leq \| u \|_{\frac12} \| v \|_{\frac12} \| h \|_{s+1}.
\]
Hence \eqref{u baru} can be interpreted as a linear system whose operator coefficients 
depend on $(u, \baru)$, namely
\begin{equation} \label{u baru Q}
\pa_t \begin{pmatrix} u \\ \overline{u} \end{pmatrix}
= \begin{pmatrix} - A(u,\baru) & - B(u,\baru) \\ 
B(u, \baru) & A(u, \baru) \end{pmatrix} 
\begin{pmatrix} u \\ \baru \end{pmatrix},
\end{equation}
where
\[
B(u, \baru) = \frac{i}{4} \la \Lm (u + \overline{u}) , u + \overline{u} \ra \Lm, 
\quad 
A(u, \baru) = i \Lm + B(u, \baru).
\]
Since our goal is the analysis of the existence time of the solutions, 
we calculate the time derivative $\pa_t (\| u \|_s^2)$ of the Sobolev norms 
and observe that the diagonal terms $A(u, \baru)$ give a zero contribution, 
while the off-diagonal terms $B(u,\baru)$, which couple $u$ with $\baru$, 
give terms that are $\leq 2 \| u \|_{\frac12}^2 \| u \|_{s+\frac12}^2$ only.
Thus, on the one hand, this energy estimate has a loss of half a derivative 
and cannot be used for the existence theory; 
on the other hand, this observation suggests that $A(u,\baru)$ can be left untouched 
by the normal form transformation. 

Hence the next natural attempt is the construction of a ``partial'' 
normal form transformation $\Phi$ that eliminates the cubic nonresonant terms only from 
$B(u,\baru)$ and does not modify $A(u,\baru)$. 
Indeed, such a transformation exists, it is bounded, and, 
unlike the full normal form, 
is close to the identity as a bounded transformation, namely  
$\| \Phi(u,\baru) - (u,\baru) \|_{H^s \times H^s} 
\lesssim \| (u,\baru) \|_{H^s \times H^s}^3$. 
Moreover, the cubic resonant terms of $B(u,\baru)$ that remain in the transformed system 
give zero contribution to the energy estimate. 
However, the transformed system contains unbounded off-diagonal terms 
of quintic and higher homogeneity order, 
which produce in the energy estimate
the same loss of half a derivative as above.

At this point it becomes clear that one has to eliminate the off-diagonal unbounded terms 
\emph{before} the normal form construction. 
This is at the base of the method developed by Delort in \cite{Delort 2009}, \cite{Delort 2012} 
to construct a normal form for quasilinear Klein-Gordon equations on the circle.
Roughly speaking, such a method consists in paralinearizing the equation, 
diagonalizing its principal symbol, so that one can obtain quasilinear 
energy estimates, and then starting with the normal form procedure.
Further developments of this approach can be found in Berti and Delort 
\cite{Berti Delort} about gravity-capillary water waves equations on $\T$. 

The off-diagonal unbounded terms of \eqref{u baru} 
are eliminated in Section \ref{sec:Bubaru},
where we construct a nonlinear bounded transformation $\Phi^{(3)}$ 
that conjugates system \eqref{u baru} to a new system (see \eqref{syst 6 dic}) 
of the form
\begin{equation} \label{u baru P}
\begin{cases}
\pa_t u = - i \sqrt{1 + 2 P(u,\baru)} \, \Lm u
+ \dfrac{i}{4(1+2 P(u,\baru))} 
\Big( \la \Lm \baru, \Lm \baru \ra - \la \Lm u , \Lm u \ra \Big) \baru,
\\
\pa_t \baru = i \sqrt{1 + 2 P(u,\baru)} \, \Lm \baru
+ \dfrac{i}{4(1+2 P(u,\baru))} 
\Big( \la \Lm \baru, \Lm \baru \ra - \la \Lm u , \Lm u \ra \Big) u,
\end{cases}
\end{equation}
where $P(u, \baru)$ is a real, nonnegative function of time only, 
defined as $P(u, \baru) = \ph(\frac14 \la \Lm(u+\baru), u+\baru \ra)$, 
and $\ph$ is the inverse of the real map $x \mapsto x \sqrt{1+2x}$, $x \geq 0$.
System \eqref{u baru P} still has the structure \eqref{u baru Q}, 
with the improvement that the off-diagonal part $B(u,\baru)$ 
is now a \emph{bounded} operator, satisfying 
\[
\| B(u, \baru) h \|_s \leq \| u \|_1^2 \| h \|_s
\]
for all $s \geq 0$, all $u,h$. 
Thanks to the special structure of the Kirchhoff equation, 
and in particular to the lower bound 
$\frac14 \la \Lm(u+\baru), u+\baru \ra = \int_{\T^d} (\Re (\Lm^{\frac12} u) )^2 \, dx \geq 0$,
the transformation $\Phi^{(3)}$ is \emph{global}, 
namely it is defined for all $u \in H^1_0(\T^d, \C)$, 
and not only for small $u$. 
In \eqref{u baru} the off-diagonal term is an operator of order one 
with coefficient $\la \Lm(u+\baru), u+\baru \ra$ 
defined for $u \in H^{\frac12}_0(\T^d,\C)$, 
while, after $\Phi^{(3)}$, the new off-diagonal term in \eqref{u baru P} 
is an operator of order zero where the coefficient 
$( \la \Lm \baru, \Lm \baru \ra - \la \Lm u, \Lm u \ra)$ 
is defined for $u \in H^1_0(\T^d,\C)$. 
Thus the price to pay for removing the unbounded off-diagonal terms 
is an increase of $\frac12$ in the regularity threshold for $u$
(as if we had integrated by parts).

We remark that, reparametrizing the time variable, 
the coefficient $\sqrt{1 + 2 P(u,\baru)}$ of the diagonal part in \eqref{u baru P}
could be normalized to 1; however, this is not needed to prove our result, 
because these coefficients are independent of $x$, 
and therefore the (unbounded) diagonal terms cancel out in the energy estimate. 

In Section \ref{sec:NF} we perform one step of normal form. 
It is a ``partial'' normal form because it does not modify the harmless cubic diagonal terms. 
The construction involves the differences $|j| - |k|$, $j, k \in \Z^d$, $j \neq k$,
as denominators, which accumulate to zero in dimension $d \geq 2$. 
This produces the different regularity thresholds $m_0$ in Theorem \ref{thm:main}, 
see Remark \ref{rem:Why m0}.
The normal form transformation $\Phi^{(4)}$ is a bounded cubic correction of the identity map,
and the off-diagonal terms of the transformed system \eqref{def X+}, \eqref{X+ ter} 
remain bounded (unlike in the discussion above). 
The resonant cubic terms that remain after $\Phi^{(4)}$ create a nonlinear interaction 
between all Fourier coefficients $u_j(t)$ with Fourier modes $j \in \Z^d$  
on a sphere $|j| = $ constant, while any two Fourier coefficients $u_j(t), u_k(t)$ 
with $|j| \neq |k|$ are uncoupled at the cubic homogeneity order. 
This, together with the conservation of the Hamiltonian, implies that there is no growth 
of Sobolev norms at the cubic homogeneity order. 
Therefore all the possible nonlinear effects of growth of Sobolev norms 
come from the terms of quintic and higher homogeneity order. 
This leads to the improved energy estimate (see \eqref{en est X+})
\[
\pa_t ( \| u(t) \|_s^2) \leq C \| u(t) \|_{m_0}^4 \| u(t) \|_s^2 
\]
for the transformed system, 
whence we deduce that the lifespan of the solutions of the original Cauchy problem \eqref{K1} 
is $T \sim (\| \a \|_{s+\frac12} + \| \b \|_{s-\frac12})^{-4}$. 

Preliminary further calculations suggest that, 
after performing the next step of normal form 
to remove the off-diagonal nonresonant quintic terms, 
some remaining quintic resonant terms could produce 
a nonlinear interaction between modes $|j| \neq |k|$, 
so that, in principle, a transfer of energy from low to high Fourier modes, 
and a growth of Sobolev norms 
(as in \cite{CKSTT 2010}, \cite{Guardia Kaloshin 2015},  
\cite{Haus Procesi 2015}, \cite{Guardia Haus Procesi 2016} 
for the semilinear Schr\"odinger equation on $\T^2$)
cannot be excluded.
The analysis of the quintic order is the objective of a further investigation.

As a final comment, we observe that the general strategy developed in 
\cite{Delort 2009}, \cite{Delort 2012}, \cite{Berti Delort} 
and also adopted in the present paper 
has a strong analogy with the technique developed for KAM theory for quasilinear PDEs 
in 
\cite{BBM Airy},
\cite{BBM 2016 KdV}, 
\cite{Feola Procesi 2015}, 
\cite{Berti Montalto}, 
\cite{Baldi Berti Haus Montalto}:
the first part of these methods uses pseudo-differential or paradifferential calculus 
to reduce the linearized or paralinearized operator 
to some more convenient diagonal form up to a sufficiently smoothing remainder, 
and it is a reduction with respect to the order of differentiation; 
then the second part uses normal forms or KAM reducibility schemes
to reduce the size of the nonconstant remainders in the operator. 
In short: first reduce in $|D_x|$, then in $\e$.

\subsection{Reversible Hamiltonian structure and prime integrals}
\label{sec:strutture} 

In this section we make some observations 
about the structure of the Kirchhoff equation. 
We do not use them directly in the proof of Theorem \ref{thm:main}, 
but they could be interesting \emph{per se}. 

As is well-known, the Kirchhoff equation has a Hamiltonian structure, 
which is \eqref{p1}-\eqref{K2}. 
Also, since the Hamiltonian \eqref{K2} is even in $v$, namely $H(u,-v) = H(u,v)$, 
the Hamiltonian vector field $X(u,v) = (\gr_v H(u,v), - \gr_u H(u,v))$ 
satisfies $X \circ S + S \circ X = 0$, where $S$ is the involution 
$S (u,v) = (u, -v)$. Therefore system \eqref{p1} is time-reversible with respect to $S$, 
which simply means that if $u(t,x)$ is a solution of \eqref{K Om}, 
then $u(-t,x)$ is also a solution of the same equation.  

Another observation is that the space of functions $u(t,x) = u(t, -x)$ that are even in $x$ 
is an invariant subspace for the Kirchhoff equation, 
as well as the space of odd functions $u(t,x) = -u(t, -x)$. 
The Fourier support is also invariant for the flow:  
since the Kirchhoff equation for $u(t,x) = \sum_{j \in \Z^d} u_j(t) e^{ij \cdot x}$ is 
the system of equations
\begin{equation} \label{ODE}
u_j'' + |j|^2 u_j \Big( 1 + \sum_{k \in \Z^d} |k|^2 |u_k|^2 \Big) = 0 
\quad \forall j \in \Z^d,
\end{equation}
if $u_j(0) = u_j'(0) = 0$ for some $j$, then $u_j(t) = 0$ for all $t$.
In particular, if the initial data $(\a,\b)$ have finite Fourier support, 
then the solution exists for all times, 
and a simple application of finite-dimensional KAM theory shows that 
some of them are quasi-periodic in time.

In addition to the Hamiltonian, the momentum 
\[
M = \int_{\T^d} (\pa_t u) \gr u \, dx
\]
is also a conserved quantity. 
Even more, because of the special structure of the Kirchhoff equation, 
the momentum is the sum $M = \sum_{j \in \Z^d} M_j$ 
of infinitely many prime integrals $M_j$, defined in the following way.  
If $u(t,x) = \sum_{j \in \Z^d} u_j(t) e^{ij \cdot x}$, then 
\[
M_j = \frac12 ij (u_j \pa_t u_{-j} - u_{-j} \pa_t u_j), \quad j \in \Z^d,
\]
and one has 
\[
\pa_t M_j = \frac12 ij (u_j \pa_{tt} u_{-j} - u_{-j} \pa_{tt} u_j)
= 0
\]
because each $u_j$ satisfies \eqref{ODE}. 
This observation seems to be new.
Since $M_{-j} = M_j$, only ``a half'' of these prime integrals are independent. 

The standard linear changes of coordinates of Section \ref{sec:Lin} 
preserve both the Hamiltonian and the reversible structure 
($S$ becomes $S_1 (u,\baru) = (\baru, u)$ in complex coordinates).
The nonlinear transformation of Section \ref{sec:Bubaru} is not symplectic, 
but it preserves the reversible structure, which is also preserved 
by the normal form of Section \ref{sec:NF}.

\subsection{Related literature and open questions} 
\label{sec:literature}


Equation \eqref{K Om} was introduced by Kirchhoff \cite{Kirchhoff 1876}  
(and, in one dimension, independently rediscovered in \cite{Carrier 1945} and \cite{Narasimha 1968}) 
to model the transversal oscillations of a clamped string or plate, 
taking into account nonlinear elastic effects.
The first results on the Cauchy problem \eqref{K Om}-\eqref{init data Om}
are due to Bernstein. In his 1940 pioneering paper \cite{Bernstein 1940}, 
he studied the Cauchy problem on an interval, with Dirichlet boundary conditions, 
and proved global wellposedness for analytic initial data $(\a,\b)$, 
and local wellposedness for $(\a, \b) \in H^2 \times H^1$.

After that, the research on the Kirchhoff equation has been developed in various directions,
with a different kind of results on compact domains (bounded subset $\Om \subset \R^d$ with Dirichlet boundary conditions, or periodic boundary conditions $\Om = \T^d$) 
or non compact domains ($\Om = \R^d$ or ``exterior domains'' $\Om = \R^d \setminus K$, 
with $K \subset \R^d$ compact domain).

For $\Om = \R^d$, Greenberg and Hu \cite{Greenberg Hu 1980} in dimension $d=1$ 
and D'Ancona and Spagnolo \cite{D'Ancona Spagnolo 1993} in higher dimension 
proved global wellposedness with scattering for small initial data in weighted Sobolev spaces. 
Further improvements, dealing with spectrally characterized initial data 
in larger subsets of the Sobolev spaces, 
and also including the case of exterior domains, 
have been more recently obtained, for example, 
by Yamazaki, Matsuyama and Ruzhansky, 
see \cite{Yamazaki 2004}, \cite{Matsuyama Ruzhansky 2013} and the many references therein.
For global solutions that do not scatter see \cite{Matsuyama 2006}.
Still open is the main question whether the solutions 
with small initial data in the standard (not weighted) Sobolev spaces 
$H^s(\R^d) \times H^{s-1}(\R^d)$ are globally defined. 

Another research direction regards the extension of global wellposedness, 
on both compact and non compact domains, 
to non small initial data that are in a larger space than analytic functions: 
see, for example, 
Pokhozhaev \cite{Pokhozhaev 1975}, 
Arosio and Spagnolo \cite{Arosio Spagnolo 1984}, 
Nishihara \cite{Nishihara 1984},
Manfrin \cite{Manfrin 2005}, 
Ghisi and Gobbino \cite{Ghisi Gobbino 2011}, 
and the references therein. 
Still open is the question whether the solutions with initial data of arbitrary size 
and Gevrey regularity (on any domain) are globally defined. 

On compact domains, 
dispersion, scattering and time-decay mechanisms are not available, 
and there are no results of global existence, nor of finite time blowup, 
for initial data $(\a,\b)$ of Sobolev, or $C^\infty$, or Gevrey regularity. 
The local wellposedness in the Sobolev class $H^{\frac32} \times H^{\frac12}$ 
has been proved by 
Dickey \cite{Dickey 1969}, 
Medeiros and Milla Miranda \cite{Medeiros Miranda 1987}
and Arosio and Panizzi \cite{Arosio Panizzi 1996}, 
with existence time of order $(\| \a \|_{\frac32} + \| \b \|_{\frac12})^{-2} $. 
Beyond the question about the global wellposedness for small data in Sobolev class, 
another open question concerns 
the local wellposedness in the energy space $H^1 \times L^2$ 
or in $H^s \times H^{s-1}$ for $1 < s < \frac32$. 

For more details, generalizations (degenerate Kirchhoff equations, Kirchhoff systems, 
forced and/or damped Kirchhoff equations, etc.) 
and other open questions, we refer to 
Lions \cite{Lions 1978} and the surveys of 
Arosio \cite{Arosio 1993}, 
Spagnolo \cite{Spagnolo 1994}, 
and Matsuyama and Ruzhansky \cite{Matsuyama Ruzhansky 2015}.

We also mention the recent results \cite{Baldi 2009},
\cite{Montalto 2017 KAM K forced},
\cite{Montalto 2017 linear reducibility},
\cite{Corsi Montalto 2018},
which prove the existence of time periodic or quasi-periodic solutions 
of time periodically or quasi-periodically forced Kirchhoff equations on $\T^d$,
using Nash-Moser and KAM techniques. 

\medskip

Concerning the normal form theory for quasilinear PDEs, 
we mention the pioneering work of Shatah \cite{Shatah 1985} 
on quasilinear Klein-Gordon equations on $\R^d$, 
the abstract result of Bambusi \cite{Bambusi 2005},
the aforementioned papers of Delort \cite{Delort 2009}, \cite{Delort 2012} 
on quasilinear Klein-Gordon on $\T$,
and the recent literature on water waves by 
Wu \cite{Wu 2009},
Germain, Masmoudi and Shatah \cite{Germain Masmoudi Shatah 2012},
Alazard and Delort \cite{Alazard Delort 2015},
Ionescu and Pusateri \cite{Ionescu Pusateri 2015},
Craig and Sulem \cite{Craig Sulem 2016 BUMI},
Ifrim and Tataru \cite{Ifrim Tataru 2017},
Berti and Delort \cite{Berti Delort}.
Other applications of normal form techniques 
to get long-time existence for nonlinear PDEs on compact domains
can be found in the work of Bambusi, Nekhoroshev, Gr\'ebert, Delort and Szeftel, 
see e.g.\ 
\cite{Bambusi Nekhoroshev 1998},
\cite{Bambusi Grebert 2006},
\cite{Bambusi Delort Grebert Szeftel 2007},
and Feola, Giuliani and Pasquali 
\cite{Feola Giuliani Pasquali 2018}.

\bigskip

\noindent
\textbf{Acknowledgements}. 
This research was supported by ERC under FP7, Project no.\,306414 
\emph{Hamiltonian PDEs and small divisor problems: a dynamical systems approach}
and by PRIN 2015 
\emph{Variational methods, with applications to problems in mathematical physics and geometry}.

\section{Linear transformations} \label{sec:Lin}
In this section we make two elementary, standard linear changes of variables 
to transform system \eqref{p1} into another one (see \eqref{syst uv}) 
where the linear part is diagonal, 
preserving both the real and the Hamiltonian structure of the problem.
These standard transformations are the symmetrization of the highest order 
(section \ref{subsec:symm highest})
and then the diagonalization of the linear terms
(section \ref{subsec:diag highest}).

\subsection{Symmetrization of the highest order}
\label{subsec:symm highest}
In the Sobolev spaces \eqref{def:Hs} of zero-mean functions, 
the Fourier multiplier 
\begin{equation*} 
\Lm := |D_x| : H^s_0 \to H^{s-1}_0, \quad  
e^{ij \cdot x} \mapsto |j| e^{ij \cdot x}
\end{equation*}
is invertible.   
System \eqref{p1} writes
\begin{equation} \label{1912.1}
\begin{cases} 
\pa_t u = v \\ 
\pa_t v = - ( 1 + \la \Lm u, \Lm u \ra ) \Lm^2 u,
\end{cases}
\end{equation}
where $\la \cdot , \cdot \ra$ is defined in \eqref{lara};
the Hamiltonian \eqref{K2} is
\[
H(u,v) = \frac12 \la v , v \ra + \frac12 \la \Lm u, \Lm u \ra 
+ \frac14 \la \Lm u, \Lm u \ra^2.
\]
To symmetrize the system at the highest order, 
we consider the linear, symplectic transformation 
\begin{equation} \label{1912.2}
(u,v) = \Phi^{(1)}(q,p) 
= ( \Lm^{-\frac12} q , \Lm^{\frac12} p). 
\end{equation}
System \eqref{1912.1} becomes 
\begin{equation} \label{1912.3}
\begin{cases}
\pa_t q = \Lm p \\ 
\pa_t p = - ( 1 + \la \Lm^{\frac12} q, \Lm^{\frac12} q \ra ) \Lm q,
\end{cases}
\end{equation}
which is the Hamiltonian system $\pa_t (q,p) = J \gr H^{(1)}(q,p)$ 
with Hamiltonian $H^{(1)} = H \circ \Phi^{(1)}$, namely
\begin{equation} \label{def H(1)}
H^{(1)}(q,p) 
= \frac12 \la \Lm^{\frac12} p, \Lm^{\frac12} p \ra 
+ \frac12 \la \Lm^{\frac12} q, \Lm^{\frac12} q \ra 
+ \frac14 \la \Lm^{\frac12} q, \Lm^{\frac12} q \ra^2, 
\quad 
J := \begin{pmatrix} 
0 & I \\ 
- I & 0 \end{pmatrix}.
\end{equation}
Note that the original problem requires the ``physical'' variables $(u,v)$ to be real-valued; 
this corresponds to $(q,p)$ being real-valued too.
Also note that $\la \Lm^{\frac12} p, \Lm^{\frac12} p \ra = \la \Lm p, p\ra$.

\subsection{Diagonalization of the highest order: complex variables}
\label{subsec:diag highest}
To diagonalize the linear part $\pa_t q = \Lm p$, $\pa_t p = - \Lm q$
of system \eqref{1912.3}, we introduce complex variables. 

System \eqref{1912.3} and the Hamiltonian $H^{(1)}(q,p)$ in \eqref{def H(1)} 
are also meaningful, without any change, for \emph{complex} functions $q,p$. 
Thus we define the change of complex variables $(q,p) = \Phi^{(2)}(f,g)$ as
\begin{equation} \label{def Phi2}
(q,p) = \Phi^{(2)}(f,g) = \Big( \frac{f+g}{\sqrt2}, \frac{f-g}{i \sqrt2} \Big),
\qquad 
f = \frac{q + i p}{\sqrt2}, \quad  
g = \frac{q - i p}{\sqrt2}, 
\end{equation}
so that system \eqref{1912.3} becomes
\begin{equation} \label{syst uv}
\begin{cases}
\pa_t f = - i \Lm f - i \frac14 \la \Lm(f+g) , f+g \ra \Lm(f+g)
\\
\pa_t g = i \Lm g + i \frac14 \la \Lm(f+g) , f+g \ra \Lm(f+g)
\end{cases}
\end{equation}
where the pairing $\la \cdot , \cdot \ra$ denotes the integral of the product 
of any two complex functions
\begin{equation} \label{SPWC} 
\la w , h \ra := \int_{\T^d} w(x) h(x) \, dx 
= \sum_{j \in \Z^d \setminus \{ 0 \} } w_j h_{-j}, 
\quad w, h \in L^2(\T^d, \C).
\end{equation}
The map $\Phi^{(2)} : (f,g) \mapsto (q,p)$ in \eqref{def Phi2} 
is a $\C$-linear isomorphism of the space $L^2_0(\T^d,\C) \times L^2_0(\T^d,\C)$ 
of pairs of complex functions. 
When $(q,p)$ are real, $(f,g)$ are complex conjugate.
The restriction of $\Phi^{(2)}$ to the space
\begin{equation*} 
L^2_0(\T^d, c.c.) := 
\{ (f,g) \in L^2_0(\T^d,\C) \times L^2_0(\T^d,\C) : g = \overline{f} \}
\end{equation*}
of pairs of complex conjugate functions 
is an $\R$-linear isomorphism onto the space $L^2_0(\T^d,\R) \times L^2_0(\T^d,\R)$ 
of pairs of real functions. 
For $g = \overline{f}$, the second equation in \eqref{syst uv} is redundant, 
being the complex conjugate of the first equation. 
In other words, system \eqref{syst uv} has the following ``real structure'': 
it is of the form 
\begin{equation*} 
\pa_t \begin{pmatrix} f \\ g \end{pmatrix} 
= \mF(f,g) = \begin{pmatrix} \mF_1(f,g) \\ \mF_2(f,g) \end{pmatrix}
\end{equation*}
where the vector field $\mF(f,g)$ satisfies 
\begin{equation} \label{real vector field}
\mF_2(f, \overline{f}) = \overline{ \mF_1(f, \overline{f}) }.
\end{equation}
Under the transformation $\Phi^{(2)}$, the Hamiltonian system \eqref{1912.3} 
for complex variables $(q,p)$ becomes \eqref{syst uv}, which is the Hamiltonian system 
$\pa_t(f,g) = i J \gr H^{(2)}(f,g)$ with Hamiltonian 
$H^{(2)} = H^{(1)} \circ \Phi^{(2)}$, namely
\begin{equation*} 
H^{(2)}(f,g) = \la \Lm f, g \ra + \frac{1}{16} \la \Lm(f+g), f+g \ra^2,
\end{equation*} 
where $J$ is defined in \eqref{def H(1)}, 
$\la \cdot , \cdot \ra$ is defined in \eqref{SPWC},
and $\gr H^{(2)}$ is the gradient with respect to $\la \cdot , \cdot \ra$.
System \eqref{1912.3} for real $(q,p)$ (which corresponds to the original Kirchhoff equation)
becomes system \eqref{syst uv} restricted to the subspace $L^2_0(\T^d,c.c.)$ where 
$g = \overline{f}$. 

To complete the definition of the function spaces, 
for any real $s \geq 0$ we define 
\begin{equation*} 
H^s_0(\T^d,c.c.) := \{ (f,g) \in L^2_0(\T^d,c.c.) : f,g \in H^s_0(\T^d,\C) \}.
\end{equation*}

\section{Diagonalization of the order one} \label{sec:Bubaru}

Following a ``para-differential approach'', 
we note that the term $\la \Lm(f+g) , f+g \ra$ in \eqref{syst uv} 
plays the r\^ole of a coefficient, 
while $\Lm$ outside the scalar product is an operator of order one, 
in the sense that 
\[
\| \la \Lm f, g \ra \Lm h \|_s = \| h \|_{s+1} | \la \Lm f, g \ra |
\leq \| h \|_{s+1} \| f \|_{\frac12} \| g \|_{\frac12} 
\quad \forall s \geq 0, \ h \in H^{s+1}, \ f,g \in H^{\frac12}.
\]
Thus we write system \eqref{syst uv} as 
\begin{equation} \label{syst Q uv}
\pa_t \begin{pmatrix} f \\ g \end{pmatrix} 
= i 
\begin{pmatrix} - 1 - Q(f,g) & - Q(f,g) \\ 
Q(f,g) & 1 + Q(f,g) \end{pmatrix} 
\Lm 
\begin{pmatrix} f \\ g \end{pmatrix} 
\end{equation}
where
\begin{equation} \label{def Q(u,v)}
Q(f,g) := \frac14 \la \Lm(f+g) , f+g \ra.
\end{equation}
The aim of this section is to diagonalize system \eqref{syst Q uv} 
up to a bounded remainder, dealing with $Q(f,g)$ as a coefficient 
(even if it depends nonlinearly on the variables $(f,g)$).
On the real subspace $L^2_0(\T^d,c.c.)$ one has $g = \overline{f}$, and therefore
\[
Q(f,g) = \frac14 \la \Lm(f+g) , f+g \ra 
= \frac14 \la \Lm^\frac12 (f+ \overline{f}) , \Lm^\frac12 (f+ \overline{f}) \ra
= \int_{\T^d} \big( \Lm^\frac12 \Re(f) \big)^2 \, dx \geq 0,
\]
where $\Re(f)$ is the real part of $f$.  
Since $Q(f,g) \geq 0$, the matrix of the coefficients in \eqref{syst Q uv} 
has purely imaginary eigenvalues.  
For any $x \geq 0$, one has
\begin{equation} \label{2101.2}
\begin{pmatrix} - 1 - x & - x \\ 
x & 1 + x \end{pmatrix}
\begin{pmatrix} 1 & \rho(x) \\ \rho(x) & 1 \end{pmatrix} 
= 
\begin{pmatrix} 1 & \rho(x) \\ \rho(x) & 1 \end{pmatrix} 
\begin{pmatrix} - \sqrt{1+2x} & 0 \\ 
0 & \sqrt{1+2x} \end{pmatrix}
\end{equation}
where 
\begin{equation} \label{def rho}
\rho(x) := \frac{- x}{1 + x + \sqrt{1+2x}}\,.
\end{equation}
Note that $-1 < \rho(x) \leq 0$ for $x \geq 0$, 
so that the matrix 
$(\begin{smallmatrix} 1 & \rho(x) \\ \rho(x) & 1 \end{smallmatrix})$ is invertible. 
We define  
\begin{equation} \label{uv eta psi}
\begin{pmatrix} f \\ g \end{pmatrix} 
= \mM \begin{pmatrix} \eta \\ \psi \end{pmatrix},
\quad 
\mM = \mM(\rho) := \frac{1}{\sqrt{1-\rho^2}} \begin{pmatrix} 1 & \rho \\ 
\rho & 1 \end{pmatrix},
\end{equation}
where $\rho = \rho(Q(f,g))$, with $\rho$ defined in \eqref{def rho},  
and $Q(f,g)$ in \eqref{def Q(u,v)}.
The presence of the factor $(1 - \rho^2)^{-1/2}$ in the definition of $\mM$ 
is discussed in Remark \ref{rem:why factor} below.
To define a nonlinear change of variable expressing $(f,g)$ in terms of $(\eta,\psi)$ 
by using \eqref{uv eta psi}, we have to express the matrix $\mM$ as a function of $\eta,\psi$.  
Using \eqref{uv eta psi}, we calculate  
\[
Q(f,g) = \frac14 \la \Lm(f+g), f+g \ra
= \frac{1+\rho(Q(f,g))}{4(1 - \rho(Q(f,g)))} \la \Lm (\eta + \psi) , \eta + \psi \ra.
\]
From definition \eqref{def rho}, for any $x \geq 0$ one has 
\[
\frac{1-\rho(x)}{1+\rho(x)} = \sqrt{1+2x},
\]
whence
\begin{equation} \label{Q rad Q}
Q(f,g) \sqrt{1+2Q(f,g)} = \frac{1}{4} \la \Lm (\eta + \psi) , \eta + \psi \ra
= Q(\eta,\psi).
\end{equation}
The function $x \mapsto x \sqrt{1+2x}$ is invertible, and we denote by $\ph$ its inverse, 
\begin{equation} \label{def ph}
x \sqrt{1+2x} = y \quad \Leftrightarrow \quad x = \ph(y).
\end{equation}
Hence we can express $Q(f,g)$ in terms of $(\eta,\psi)$ as 
\begin{equation} \label{Q ph Q}
Q(f,g) = \ph \Big( \frac{1}{4} \la \Lm (\eta + \psi) , \eta + \psi \ra \Big)
= \ph(Q(\eta,\psi)) =: P(\eta,\psi).
\end{equation}
As a consequence, the matrix $\mM$ in \eqref{uv eta psi} 
can also be expressed as a function of $(\eta,\psi)$. 
In short, we denote it by $\mM(\eta,\psi)$, 
so that $\mM(\eta,\psi)$ is $\mM(\rho)$ where 
$\rho = \rho(\ph(Q(\eta,\psi))) = \rho(P(\eta,\psi))$, 
namely
\begin{equation} \label{def M(eta,psi)}
\mM(\eta,\psi) := \frac{1}{\sqrt{1-\rho^2(P(\eta,\psi))}} 
\begin{pmatrix} 1 & \rho(P(\eta,\psi)) \\ 
\rho(P(\eta,\psi)) & 1 \end{pmatrix}.
\end{equation}
We define the transformation $(f,g) = \Phi^{(3)}(\eta,\psi)$ by formula 
\eqref{uv eta psi} where $\mM = \mM(\eta,\psi)$.

\begin{lemma} \label{lemma:Phi3 inv}
Let $\Phi^{(3)}$ be the map 
\begin{equation} \label{def Phi3}
\Phi^{(3)}(\eta,\psi) = \mM(\eta,\psi) \begin{pmatrix} \eta \\ \psi \end{pmatrix}, 
\end{equation}
where $\mM(\eta,\psi)$ is defined in \eqref{def M(eta,psi)}, 
$\rho$ is defined in \eqref{def rho} 
and $P$ in \eqref{Q ph Q}. 
Then, for all real $s \geq \frac12$, 
the nonlinear map $\Phi^{(3)} : H^s_0(\T^d, c.c.) \to H^s_0(\T^d, c.c.)$ 
is invertible, continuous, with continuous inverse 
\begin{equation*} 
(\Phi^{(3)})^{-1} (f,g) = \frac{1}{\sqrt{1 - \rho^2(Q(f,g))}} 
\begin{pmatrix} 1 & - \rho(Q(f,g)) \\ - \rho(Q(f,g)) & 1 \end{pmatrix} 
\begin{pmatrix} f \\ g \end{pmatrix}. 
\end{equation*}
Moreover, for all $s \geq \frac12$, 
all $(\eta,\psi) \in H^s_0(\T^d,c.c.)$,  
one has 
\begin{equation*} 
\| \Phi^{(3)}(\eta,\psi) \|_s 
\leq C( \| \eta,\psi \|_{\frac12} ) \| \eta,\psi \|_s
\end{equation*}
for some increasing function $C$. 
The same estimate is satisfied by $(\Phi^{(3)})^{-1}$.
\end{lemma}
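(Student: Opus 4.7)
The plan is to work entirely with explicit formulas, exploiting the fact that $\rho(P(\eta,\psi))$ and $\rho(Q(f,g))$ are scalar real numbers (not operators), so that the matrix $\mM$ acts on $(\eta,\psi)$ as a constant-coefficient $2\times 2$ matrix whose entries depend only on the single real parameter $Q$ or $P$.

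First I would check the preservation of the real subspace. If $\psi=\bar\eta$, then $Q(\eta,\psi)=\int_{\T^d}(\Lm^{1/2}\Re\eta)^2\,dx\ge 0$ is real and nonnegative; by \eqref{def ph}, $P(\eta,\psi)=\ph(Q(\eta,\psi))\ge 0$ is real; and by \eqref{def rho}, $\rho(P(\eta,\psi))\in(-1,0]$. Because the matrix $\mM(\eta,\psi)$ has real entries and swaps its two rows under conjugation of columns, the pair $(f,g):=\mM(\eta,\psi)(\eta,\bar\eta)^T$ still satisfies $g=\bar f$. Thus $\Phi^{(3)}$ maps $H^s_0(\T^d,c.c.)$ into itself. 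The Sobolev estimate then follows from $|Q(\eta,\psi)|\le\tfrac14\|\eta+\psi\|_{1/2}^2\lesssim\|(\eta,\psi)\|_{1/2}^2$, the continuity of $\ph$ and $\rho$, and the pointwise bound $\|\mM(\eta,\psi)(h_1,h_2)^T\|_s\le\tfrac{1+|\rho|}{\sqrt{1-\rho^2}}\|(h_1,h_2)\|_s=\sqrt{(1-\rho)/(1+\rho)}\,\|(h_1,h_2)\|_s$, with $\rho=\rho(P(\eta,\psi))$, which is a continuous increasing function of $\|(\eta,\psi)\|_{1/2}$.

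The heart of the lemma is the invertibility, and the key algebraic identity is that the transformation replaces the parameter $Q(f,g)$ by $P(\eta,\psi)$ in a consistent way. Let $\Psi(f,g)$ denote the right-hand side of the claimed inverse formula. On one hand, starting from $(\eta,\psi)$ and setting $(f,g)=\Phi^{(3)}(\eta,\psi)$, I compute
\[
f+g=\tfrac{1+\rho}{\sqrt{1-\rho^2}}(\eta+\psi)=\sqrt{\tfrac{1+\rho}{1-\rho}}\,(\eta+\psi),
\]
with $\rho=\rho(P(\eta,\psi))$, so that, using $(1-\rho)/(1+\rho)=\sqrt{1+2P}$ from the derivation of \eqref{Q rad Q},
\[
Q(f,g)=\tfrac14\la\Lm(f+g),f+g\ra=\tfrac{1+\rho}{1-\rho}\,Q(\eta,\psi)=\tfrac{Q(\eta,\psi)}{\sqrt{1+2P(\eta,\psi)}}=P(\eta,\psi),
\]
where the last equality uses $Q(\eta,\psi)=P\sqrt{1+2P}$ from \eqref{Q rad Q}. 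Therefore $\rho(Q(f,g))=\rho(P(\eta,\psi))$, the two $2\times 2$ matrices in $\Phi^{(3)}$ and $\Psi$ are exact inverses of each other (the direct product gives the identity since their determinant is $1-\rho^2$), and $\Psi(\Phi^{(3)}(\eta,\psi))=(\eta,\psi)$.

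Conversely, starting from $(f,g)\in H^s_0(\T^d,c.c.)$ and defining $(\tilde\eta,\tilde\psi):=\Psi(f,g)$, an analogous computation gives $\tilde\eta+\tilde\psi=\sqrt{(1+\rho')/(1-\rho')}\,(f+g)$ with $\rho'=-\rho(Q(f,g))$, hence $Q(\tilde\eta,\tilde\psi)=\frac{1-\rho(Q(f,g))}{1+\rho(Q(f,g))}Q(f,g)=Q(f,g)\sqrt{1+2Q(f,g)}$, so by the defining property \eqref{def ph} of $\ph$, $P(\tilde\eta,\tilde\psi)=\ph(Q(\tilde\eta,\tilde\psi))=Q(f,g)$; in particular $\rho(P(\tilde\eta,\tilde\psi))=\rho(Q(f,g))$, and applying $\Phi^{(3)}$ recovers $(f,g)$. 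The bound on $\Psi$ is identical to that on $\Phi^{(3)}$, with $P$ replaced by $Q$. Continuity of both maps follows from continuity of $\rho,\ph$ and of the scalar-valued maps $(\eta,\psi)\mapsto Q(\eta,\psi)$ on $H^{1/2}_0$. The only mildly delicate point is making sure that the pointwise arguments of $\rho$ stay in $[0,\infty)$, which is guaranteed by the nonnegativity of $Q$ and $P$ on the c.c.\ subspace noted in the preamble to this section.
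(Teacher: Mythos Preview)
Your proof is correct and follows the same approach as the paper. The paper's own proof is extremely terse because the key algebraic identity $Q(f,g)=P(\eta,\psi)$ (and hence $\rho(Q(f,g))=\rho(P(\eta,\psi))$) was already derived in the discussion preceding the lemma; the authors then only check the c.c.\ preservation and leave the Sobolev bound and the two-sided inverse verification as implicit. You have spelled out both directions $\Psi\circ\Phi^{(3)}=\mathrm{Id}$ and $\Phi^{(3)}\circ\Psi=\mathrm{Id}$ explicitly, which is a welcome addition, and your Sobolev estimate is exactly the bound one obtains from the scalar matrix norm.
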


\begin{proof}
The regularity $H^{\frac12}$ guarantees that $Q(f,g)$ and $Q(\eta,\psi)$ are finite. 
The only point to prove is that $\Phi^{(3)}$ and its inverse map 
pairs of complex conjugate functions into pairs of complex conjugate functions.
Let $(\eta,\psi) \in H^{\frac12}_0(\T^d, c.c.)$. 
Then $Q(\eta,\psi)$, and therefore also $P(\eta,\psi) = \ph(Q(\eta,\psi))$, 
are real and $\geq 0$. Let $(f,g) = \Phi^{(3)}(\eta,\psi)$, namely 
\[
f = \frac{\eta + \rho \psi}{\sqrt{1 - \rho^2}}, \quad 
g = \frac{\rho \eta + \psi}{\sqrt{1 - \rho^2}}
\]
where $\rho = \rho(P(\eta,\psi))$. 
Since $\psi = \overline{\eta}$ and $\rho$ is real, 
we deduce that $\overline{f} = g$, 
and therefore $(f,g) \in H^{\frac12}_0(\T^d,c.c.)$. 
\end{proof}

Now we calculate how system \eqref{syst uv}, i.e.\  \eqref{syst Q uv}, 
transforms under the change of variable 
$(f,g) = \Phi^{(3)}(\eta, \psi) = \mM(\eta,\psi)[\eta,\psi]$.
We calculate
\[
\pa_t (f,g) 
= \pa_t \{ \mM(\eta,\psi)[ \eta, \psi] \}
= \mM(\eta, \psi) [\pa_t \eta, \pa_t \psi] 
+ \pa_t \{ \mM(\eta, \psi) \} [\eta, \psi],
\]
and 
\[
\pa_t \{ \mM(\eta, \psi) \}
= \frac{1}{(1 - \rho^2)^{3/2}} \begin{pmatrix} \rho & 1 \\ 1 & \rho \end{pmatrix} 
\pa_t \rho,
\]
\[
\pa_t \rho 
= \pa_t \{ \rho(\ph(Q(\eta,\psi))) \}
= \rho' \big( \ph(Q(\eta,\psi)) \big) \, 
\ph' \big( Q(\eta,\psi) \big) \, 
\frac12 \la \Lm(\eta + \psi), \pa_t \eta + \pa_t \psi \ra.
\]
By \eqref{2101.2} and \eqref{Q ph Q}, we have
\begin{align*}
& \begin{pmatrix} - i (1 + Q(f,g)) & - i Q(f,g) \\ 
i Q(f,g) & i (1 + Q(f,g)) \end{pmatrix}  
\begin{pmatrix} \Lm f \\ \Lm g \end{pmatrix}
= \mM(\eta,\psi) 
\begin{pmatrix} - 1 & 0 
\\ 0 & 1 \end{pmatrix} 
i \sqrt{1 + 2 P(\eta,\psi)} \begin{pmatrix} \Lm \eta \\ \Lm \psi \end{pmatrix}.
\end{align*}
Thus, applying $\mM(\eta,\psi)^{-1}$ from the left, \eqref{syst Q uv} becomes
\begin{equation} \label{syst intermedio 1}
\pa_t \begin{pmatrix} \eta \\ \psi \end{pmatrix} 
+ \mM(\eta,\psi)^{-1} \pa_t \{\mM(\eta,\psi)\} \begin{pmatrix} \eta \\ \psi \end{pmatrix} 
= \begin{pmatrix} - 1 & 0 \\ 0 & 1 \end{pmatrix} 
i \sqrt{1 + 2 P(\eta,\psi)} 
\begin{pmatrix} \Lm \eta \\ \Lm \psi \end{pmatrix}.
\end{equation}
We calculate
\[
\mM(\eta,\psi)^{-1} 
= \frac{1}{\sqrt{1 - \rho^2}} \begin{pmatrix} 1 & -\rho \\ -\rho & 1 \end{pmatrix},
\]
\[
\frac{1}{\sqrt{1 - \rho^2}} \begin{pmatrix} 1 & -\rho \\ -\rho & 1 \end{pmatrix}
\frac{1}{(1 - \rho^2)^{\frac32}} \begin{pmatrix} \rho & 1 \\ 1 & \rho \end{pmatrix}
= \frac{1}{1 - \rho^2} \begin{pmatrix} 0 & 1 \\ 1 & 0 \end{pmatrix},
\]
\[
\rho'(x) 
= \frac{-1}{\,\sqrt{1 + 2x} \, (1 + x + \sqrt{1+2x})\,},
\]
\[
\frac{1}{1 - \rho^2(x)} \cdot \frac{-1}{\,\sqrt{1 + 2x} \, (1 + x + \sqrt{1+2x})\,}
= \frac{-1}{2(1 + 2x)},
\]
\[
\ph'(y) = \frac{\sqrt{1+2\ph(y)}}{1+3 \ph(y)}. 
\]
Hence
\[
\mM(\eta,\psi)^{-1} \pa_t \{ \mM(\eta,\psi) \} \begin{pmatrix} \eta \\ \psi \end{pmatrix}
= \mK(\eta,\psi) \begin{pmatrix} \pa_t \eta \\ \pa_t \psi \end{pmatrix}
\]
where $\mK(\eta,\psi)$ is the operator 
\begin{equation*} 
\mK(\eta,\psi) \begin{pmatrix} \a \\ \b \end{pmatrix} 
:= \begin{pmatrix} \psi \\ \eta \end{pmatrix} 
F(\eta,\psi) \la \Lm(\eta + \psi), \a + \b \ra
\end{equation*}
and $F(\eta,\psi)$ is the scalar factor
\begin{equation} \label{def F factor}
F(\eta,\psi) 
:= \frac{- 1}{4 (1 + 3 P(\eta,\psi)) \sqrt{1 + 2 P(\eta,\psi)}}. 
\end{equation}
By induction, for all $n = 1,2,3,\ldots$ one has
\[
\mK^n(\eta,\psi) \begin{pmatrix} \a \\ \b \end{pmatrix} 
= \begin{pmatrix} \psi \\ \eta \end{pmatrix} F(\eta,\psi)^n \la \Lm(\eta + \psi), \eta + \psi \ra^{n-1} \la \Lm(\eta + \psi), \a + \b \ra.
\]
Thus, by geometric series,  
\[
\sum_{n=1}^\infty (- \mK(\eta,\psi))^n \begin{pmatrix} \a \\ \b \end{pmatrix} 
= \begin{pmatrix} \psi \\ \eta \end{pmatrix} \la \Lm(\eta + \psi), \a + \b \ra
\frac{-F(\eta,\psi)}{1 + F(\eta,\psi) \la \Lm(\eta + \psi), \eta + \psi \ra}
\]
provided that $|F(\eta,\psi) \la \Lm(\eta + \psi), \eta + \psi \ra| < 1$. 
Since $\la \Lm(\eta + \psi), \eta + \psi \ra = 4 Q(\eta,\psi)$, using \eqref{def F factor}, 
\eqref{Q ph Q}, \eqref{Q rad Q} and \eqref{def ph}, 
we have 
\begin{align*}
F(\eta,\psi) \la \Lm(\eta + \psi), \eta + \psi \ra
& = \frac{- Q(\eta,\psi)}{(1 + 3 P(\eta,\psi)) \sqrt{1 + 2 P(\eta,\psi)}}
= \frac{- Q(f,g) }{1 + 3 Q(f,g)}, 
\end{align*}
whence $|F(\eta,\psi) \la \Lm(\eta + \psi), \eta + \psi \ra| < 1/3$ 
for all $Q(f,g) \geq 0$, and the geometric series converges. 
Using the same identities, 
we also obtain that 
\[
\frac{-F(\eta,\psi)}{1 + F(\eta,\psi) \la \Lm(\eta + \psi), \eta + \psi \ra}
= \frac{1}{4 (1+2 P(\eta,\psi))^{\frac32}}.
\]
Hence 
\[
( I + \mK(\eta,\psi))^{-1} \begin{pmatrix} \a \\ \b \end{pmatrix}
= \begin{pmatrix} \a \\ \b \end{pmatrix} 
+ \begin{pmatrix} \psi \\ \eta \end{pmatrix} \la \Lm(\eta + \psi), \a + \b \ra
\frac{1}{4 (1+2 P(\eta,\psi))^{\frac32}}.
\]
Then system \eqref{syst intermedio 1} becomes 
\[
\pa_t \begin{pmatrix} \eta \\ \psi \end{pmatrix}
= ( I + \mK(\eta,\psi))^{-1} \begin{pmatrix} - i \sqrt{1 + 2 P(\eta,\psi)} & 0 \\ 
0 & i \sqrt{1 + 2 P(\eta,\psi)} \end{pmatrix} 
\begin{pmatrix} \Lm \eta \\ \Lm \psi \end{pmatrix},
\]
which is 
\[
\pa_t \begin{pmatrix} \eta \\ \psi \end{pmatrix}
= \begin{pmatrix} - i \sqrt{1 + 2 P(\eta,\psi)} \, \Lm \eta \\ 
i \sqrt{1 + 2 P(\eta,\psi)} \, \Lm \psi \end{pmatrix}
+ \begin{pmatrix} \psi \\ \eta \end{pmatrix} \la \Lm(\eta + \psi), \Lm(\psi - \eta) \ra
\frac{i}{4(1+2 P(\eta,\psi))},
\]
namely
\begin{equation} \label{syst 6 dic}
\begin{cases}
\pa_t \eta = - i \sqrt{1 + 2 P(\eta,\psi)} \, \Lm \eta
+ \dfrac{i}{4(1+2 P(\eta,\psi))} 
\Big( \la \Lm \psi, \Lm \psi \ra - \la \Lm \eta , \Lm \eta \ra \Big) \psi
\\
\pa_t \psi = i \sqrt{1 + 2 P(\eta,\psi)} \, \Lm \psi
+ \dfrac{i}{4(1+2 P(\eta,\psi))} 
\Big( \la \Lm \psi, \Lm \psi \ra - \la \Lm \eta , \Lm \eta \ra \Big) \eta.
\end{cases}
\end{equation}
We remark that system \eqref{syst 6 dic} is diagonal at the order one, 
i.e.\ the coupling of $\eta$ and $\psi$ (except for the coefficients) 
is confined to terms of order zero. 
Also note that the coefficients of \eqref{syst 6 dic} are finite for $\eta,\psi \in H^1_0$, while the coefficients in \eqref{syst uv} are finite for $f,g \in H^{\frac12}_0$:  
the regularity threshold of the transformed system is $\frac12$ higher than before. 

We also note that the real structure is preserved, 
namely the second equation in \eqref{syst 6 dic} is the complex conjugate of the first one, 
or, in other words, the vector field in \eqref{syst 6 dic} satisfies property 
\eqref{real vector field}.

\begin{remark} \label{rem:why factor}
It would be tempting to use the transformation 
\begin{equation} \label{tempting}
\begin{pmatrix} f \\ g \end{pmatrix} 
= \frac{1}{1 + \rho} \begin{pmatrix} 1 & \rho \\ \rho & 1 \end{pmatrix}
\begin{pmatrix} \eta \\ \psi \end{pmatrix} 
\end{equation}
instead of \eqref{uv eta psi}, because \eqref{tempting} preserves the formula of $Q$: 
\[
Q(f,g) = \frac14 \la \Lm(f+g), f+g \ra 
= \frac14 \la \Lm(\eta+\psi), \eta + \psi \ra = Q(\eta,\psi),
\]
avoiding the use of the inverse function $\ph$.
However, using \eqref{tempting} 
would produce a diagonal term of order zero in the transformed system
which does not cancel out in the energy estimate
(in fact, on the real subspace $\psi = \bar\eta$ such a diagonal term 
has a real coefficient). 
The factor $(1 - \rho^2)^{-\frac12}$ in \eqref{uv eta psi} 
is the only (up to constant factors) choice that eliminates 
those diagonal terms of order zero.
This property is related to the symplectic structure of \eqref{syst uv}.
\end{remark}

\section{Normal form transformation} 
\label{sec:NF} 

Let $(\eta,\psi)$ be a solution of \eqref{syst 6 dic}, 
with $\psi = \overline{\eta}$. 
Then its a priori energy estimate is
\begin{align*}
\pa_t (\| \eta \|_s^2) 
& = \pa_t \la \Lm^s \eta, \Lm^s \psi \ra 
= \la \pa_t \eta , \Lm^{2s} \psi \ra 
+ \la \Lm^{2s} \eta, \pa_t \psi \ra
\\ 
& = \la - i \sqrt{1 + 2 P(\eta,\psi)} \, \Lm \eta
+ \dfrac{i ( \la \Lm \psi, \Lm \psi \ra - \la \Lm \eta , \Lm \eta \ra )}{4(1+2 P(\eta,\psi))} \psi , \Lm^{2s} \psi \ra 
\\ 
& \quad \ 
+ \la \Lm^{2s} \eta, i \sqrt{1 + 2 P(\eta,\psi)} \, \Lm \psi
+ \dfrac{i ( \la \Lm \psi, \Lm \psi \ra - \la \Lm \eta , \Lm \eta \ra )}{4(1+2 P(\eta,\psi))} \eta \ra
\\ 
& = \dfrac{i ( \la \Lm \psi, \Lm \psi \ra - \la \Lm \eta , \Lm \eta \ra )}{4(1+2 P(\eta,\psi))} 
( \la \psi , \Lm^{2s} \psi \ra + \la \Lm^{2s} \eta, \eta \ra )
\\ 
& \leq \| \eta \|_1^2 \| \eta \|_s^2. 
\end{align*}
This gives the local existence in $H^1_0$ in a time interval $[0,T]$ 
with $T = O(\| \eta(0) \|_1^{-2})$. 
We note that the terms $(- i \sqrt{1 + 2 P(\eta,\psi)} \, \Lm \eta, 
i \sqrt{1 + 2 P(\eta,\psi)} \, \Lm \psi)$ 
give no contribution to the energy estimate, 
thanks to their diagonal structure, which was obtained in the previous section.  
Hence, to improve the energy estimate and to extend the existence time, 
there is no need to modify those terms. 
In fact, reparametrizing the time variable, the coefficient $\sqrt{1 + 2 P(\eta,\psi)}$
could be normalized to 1; however, as just noticed, this is not needed to 
our purposes. 

The next step in our proof is the cancellation of the cubic terms contributing 
to the energy estimate. 
We write \eqref{syst 6 dic} as
\begin{equation} \label{syst DBR}
\pa_t (\eta,\psi) = X(\eta,\psi) = \mD_1(\eta,\psi) + \mD_{\geq 3}(\eta,\psi) 
+ \mB_3(\eta,\psi) + \mR_{\geq 5}(\eta,\psi)
\end{equation}
where $\mD_1(\eta,\psi)$ is the linear component of the unbounded diagonal operator
$\mD(\eta,\psi) = i \sqrt{1 + 2 P(\eta,\psi)} (- \Lm \eta, \Lm \psi)$,
namely
\begin{equation*} 
\mD_1(\eta,\psi) = \begin{pmatrix} - i \Lm \eta \\ i \Lm \psi \end{pmatrix},
\end{equation*} 
$\mD_{\geq 3}(\eta,\psi)$ is the difference $\mD - \mD_1$, 
namely 
\begin{equation} \label{def mD geq 3}
\mD_{\geq 3}(\eta,\psi) = \Big( \sqrt{1 + 2 P(\eta,\psi)} \, - 1 \Big) 
\begin{pmatrix} - i \Lm \eta \\ i \Lm \psi \end{pmatrix},
\end{equation} 
$\mB_3(\eta,\psi)$ is the cubic component of the bounded, off-diagonal term
\begin{equation} \label{def mB}
\mB_3(\eta,\psi) = 
\frac{i}{4} 
\Big( \la \Lm \psi, \Lm \psi \ra - \la \Lm \eta , \Lm \eta \ra \Big) 
\begin{pmatrix} \psi \\ \eta \end{pmatrix}
\end{equation}
and $\mR_{\geq 5}(\eta,\psi)$ is the bounded remainder of higher homogeneity degree
\begin{equation} \label{def mR geq 5}
\mR_{\geq 5}(\eta,\psi) = 
\frac{- i P(\eta,\psi)}{2 (1 + 2 P(\eta,\psi))} 
\Big( \la \Lm \psi, \Lm \psi \ra - \la \Lm \eta , \Lm \eta \ra \Big) 
\begin{pmatrix} \psi \\ \eta \end{pmatrix}.
\end{equation}
The aim of this section is to remove $\mB_3$ 
($\mD$ gives no contribution to the energy estimate, 
and $\mR_{\geq 5}(\eta,\psi) = O((\eta,\psi)^5)$ gives a contribution of higher order). 

We consider a transformation $(\eta,\psi) = \Phi^{(4)}(w,z)$ of the form 
\begin{equation} \label{def Phi4}
\begin{pmatrix} \eta \\ \psi \end{pmatrix} = \Phi^{(4)}(w,z)
= (I + M(w,z) ) \begin{pmatrix} w \\ z \end{pmatrix}, 
\end{equation}
\[ 
M(w,z) = \begin{pmatrix} M_{11}(w,z) & M_{12}(w,z) \\ 
M_{21}(w,z) & M_{22}(w,z) \end{pmatrix},
\]
\[
M_{ij}(w,z) = A_{ij}[w,w] + B_{ij}[w,z] + C_{ij}[z,z],
\quad i,j \in \{ 1,2 \},
\]
where $A_{ij}$, $B_{ij}$, $C_{ij}$ are bilinear maps. 
We also denote 
\[
A[w,w] = \begin{pmatrix} A_{11}[w,w] & A_{12}[w,w] \\ 
A_{21}[w,w] & A_{22}[w,w] \end{pmatrix}
\]
and similarly for $B[w,z]$ and $C[z,z]$. 
We assume that 
\[
A[w_1, w_2] = A[w_2, w_1], \quad 
C[z_1, z_2] = C[z_2, z_1] \quad \forall w_1, w_2, z_1, z_2.
\]
We calculate how system \eqref{syst 6 dic} transforms under 
the change of variable $(\eta,\psi) = \Phi^{(4)}(w,z)$. 
One has 
\begin{align*}
\pa_t \begin{pmatrix} \eta \\ \psi \end{pmatrix} 
= (I + M(w,z)) \begin{pmatrix} \pa_t w \\ \pa_t z \end{pmatrix}
+ \{ \pa_t M(w,z) \} \begin{pmatrix} w \\ z \end{pmatrix} 
\end{align*}
and
\begin{align*}
\pa_t M(w,z) 
& = \pa_t ( A[w,w] + B[w,z] + C[z,z] )
\\ & 
= 2 A[w, \pa_t w] + B[\pa_t w, z] + B[w, \pa_t z] + 2 C[z, \pa_t z].
\end{align*}
Thus 
\[
\pa_t \begin{pmatrix} \eta \\ \psi \end{pmatrix} 
= (I + K(w,z)) \begin{pmatrix} \pa_t w \\ \pa_t z \end{pmatrix},
\]
where 
\begin{equation} \label{def K(f,g)}
K(w,z)\begin{pmatrix} \a \\ \b \end{pmatrix}
= M(w,z) \begin{pmatrix} \a \\ \b \end{pmatrix}
+ \{ 2 A[w, \a] + B[\a, z] + B[w, \b] + 2 C[z, \b] \} 
\begin{pmatrix} w \\ z \end{pmatrix}.
\end{equation}
System \eqref{syst 6 dic}, namely \eqref{syst DBR}, becomes 
\begin{equation} \label{systo extra fg}
(I + K(w,z)) \begin{pmatrix} \pa_t w \\ \pa_t z \end{pmatrix}
= X (\Phi^{(4)}(w,z)). 
\end{equation}
Assume that, by Neumann series, $I + K(w,z)$ is invertible 
(this will be proved below, after the choice of $M(w,z)$). 
Thus \eqref{systo extra fg} becomes
\begin{equation} \label{def X+}
\pa_t \begin{pmatrix} w \\ z \end{pmatrix}
= (I + K(w,z))^{-1} X(\Phi^{(4)}(w,z))
=: X^+(w,z).
\end{equation}
Since $X = \mD_1 + \mD_{\geq 3} + \mB_3 + \mR_{\geq 5}$ 
and $(I + K(w,z))^{-1} = I - K(w,z) + \tilde K(w,z)$, 
where $\tilde K(w,z) := \sum_{n=2}^\infty (- K(w,z))^n$, 
we calculate  
\begin{align}
X^+(w,z) 
& = \mD_1(w,z) 
+ \mD_1 \Big( M(w,z) \begin{pmatrix} w \\ z \end{pmatrix} \Big)
- K(w,z) \mD_1 (w,z) 
\notag \\ & \quad \ 
- K(w,z) \mD_1 \Big( M(w,z) \begin{pmatrix} w \\ z \end{pmatrix} \Big)
+ \tilde K(w,z) \mD_1( \Phi^{(4)}(w,z))
+ \mB_3(w,z) 
\notag \\ & \quad \ 
+ (I + K(w,z))^{-1} \mD_{\geq 3} ( \Phi^{(4)}(w,z))
+ (I + K(w,z))^{-1} \mR_{\geq 5} ( \Phi^{(4)}(w,z))
\notag \\ & \quad \ 
+ [ \mB_3 ( \Phi^{(4)}(w,z)) - \mB_3(w,z) ]
+ \big( - K(w,z) + \tilde K(w,z) \big) \mB_3 ( \Phi^{(4)}(w,z)).
\label{X+}
\end{align}
We look for $M(w,z)$ such that the cubic terms
\begin{equation} \label{homological}
X_3^+ (w,z) 
:= \mD_1 \Big( M(w,z) \begin{pmatrix} w \\ z \end{pmatrix} \Big)
- K(w,z) \mD_1 (w,z) 
+ \mB_3(w,z) 
\end{equation}
give no contribution to the energy estimate. 
Note that $X_3^+$ is not the entirety of the cubic terms of $X^+$,  
because a cubic term also arises from 
$(I + K(w,z))^{-1} \mD_{\geq 3} ( \Phi^{(4)}(w,z))$; 
however, this cubic term is diagonal, 
it does not contribute to the energy estimate, 
and it does not interact with the off-diagonal cubic term $\mB_3(w,z)$, 
therefore we do not include it in \eqref{homological}.  

The first component $(X_3^+)_1(w,z)$ 
of the vector $X_3^+(w,z)$ in \eqref{homological} is
\begin{align*}
(X_3^+)_1(w,z) 
& = - i \Lm M_{11}(w,z) w - i \Lm M_{12}(w,z) z 
+ i M_{11}(w,z) \Lm w - i M_{12}(w,z) \Lm z
\\ & \quad \ 
- \{ 2 A_{11}[w, -i \Lm w] + B_{11}[-i\Lm w, z] + B_{11}[w, i \Lm z] + 2 C_{11}[z, i\Lm z] \} w
\\ & \quad \ 
- \{ 2 A_{12}[w, -i \Lm w] + B_{12}[-i\Lm w, z] + B_{12}[w, i \Lm z] + 2 C_{12}[z, i\Lm z] \} z
\\ & \quad \ 
+ \frac{i}{4} \Big( \la \Lm z, \Lm z \ra - \la \Lm w, \Lm w \ra \Big) z. 
\end{align*}
We choose 
\[
M_{11} = 0, \quad B_{12} = 0,
\]
because $M_{11}$ is not involved in the calculation to remove 
the off-diagonal terms (those ending with $z$), 
and there are no terms of the form [coefficient $O(wz)$ times $z$] to remove. 
It remains 
\begin{align*}
(X_3^+)_1(w,z) 
& = - i \Lm A_{12}[w,w] z - i \Lm C_{12}[z,z] z
- i A_{12}[w,w] \Lm z - i C_{12}[z,z] \Lm z
\\ & \quad \ 
+ 2 i A_{12}[w, \Lm w] z - 2 i C_{12}[z, \Lm z] z
+ \frac{i}{4} \big( \la \Lm z, \Lm z \ra - \la \Lm w, \Lm w \ra \big) z.
\end{align*}
We look for $A_{12}, C_{12}$ of the form
\begin{align*}
A_{12}[u , v] h 
& = \sum_{j,k \in \Z^d \setminus \{ 0 \}} u_j v_{-j} a_{12}(j,k) h_k e^{ik \cdot x} 
\quad \forall u,v,h,  
\\
C_{12}[u, v] h 
& = \sum_{j,k \in \Z^d \setminus \{ 0 \}} u_j v_{-j} c_{12}(j,k) h_k e^{ik \cdot x}
\quad \forall u,v,h,  
\end{align*}
for some coefficients $a_{12}(j,k), c_{12}(j,k)$ to be determined, 
where $u_j, v_j, h_k$ are the Fourier coefficients of any functions $u(x), v(x), h(x)$.
Hence 
\begin{align*}
(X_3^+)_1 (w,z)
& = \sum_{j,k \neq 0} w_j w_{-j} z_k e^{ik \cdot x} 
\Big( 2i (|j| - |k|) a_{12}(j,k) - \frac{i}{4} |j|^2 \Big)
\\ & \quad \ 
+ \sum_{j,k \neq 0} z_j z_{-j} z_k e^{ik \cdot x} 
\Big( -2i (|j| + |k|) c_{12}(j,k) + \frac{i}{4} |j|^2 \Big).
\end{align*}
We fix
\begin{equation} \label{fix a12 c12}
a_{12}(j,k) := \begin{cases} \dfrac{|j|^2}{8(|j| - |k|)} 
& \text{if} \ |j| \neq |k|, 
\\
0 & \text{if} \ |j| = |k|,
\end{cases}
\quad \qquad
c_{12}(j,k) := \frac{|j|^2}{8(|j| + |k|)}.
\end{equation}
Thus the operators $A_{12}, C_{12}$ are
\begin{align}
\label{fix A12}
A_{12}[u, v] h
& = \sum_{j,k \neq 0, \, |j| \neq |k|} 
u_j v_{-j} \frac{|j|^2}{8(|j| - |k|)} h_k e^{ik \cdot x},    
\\
C_{12}[u, v] h 
& = \sum_{j,k \neq 0} u_j v_{-j} \frac{|j|^2}{8(|j| + |k|)} h_k e^{ik \cdot x},
\label{fix C12}
\end{align}
and 
\begin{equation} \label{X3+ 1}
(X_3^+)_1(w,z) 
= - \frac{i}{4} \sum_{j,k \neq 0,\, |k| = |j|} w_j w_{-j} |j|^2 z_k e^{ik \cdot x}.  
\end{equation}
The analogous calculation for the second component $(X_3^+)_2(w,z)$ of the vector 
in \eqref{homological} leads to the choice 
\begin{equation} \label{fix A21 C21}
M_{22} = 0, \quad 
B_{21} = 0, \quad 
A_{21} = C_{12}, \quad 
C_{21} = A_{12},
\end{equation}
and it remains 
\begin{equation} \label{X3+ 2}
(X_3^+)_2(w,z) 
= \frac{i}{4} \sum_{j,k \neq 0,\, |k| = |j|} z_j z_{-j} |j|^2 w_k e^{ik \cdot x}.  
\end{equation}
We will see below (see \eqref{below is here}) 
that the remaining cubic terms $(X_3^+)_1(w,z)$ and $(X_3^+)_2(w,z)$ 
do not contribute to the growth of the Sobolev norms in the energy estimate. 

Now that $M$ has been fixed, 
we have to prove the invertibility of $(I + K(w,z))$ by Neumann series. 
Since 
\begin{equation} \label{formula M}
M(w,z) = \begin{pmatrix} 0 & A_{12}[w,w] + C_{12}[z,z] \\ 
A_{21}[w,w] + C_{21}[z,z] & 0 \end{pmatrix},
\end{equation}
recalling \eqref{def K(f,g)} one has
\begin{equation} \label{formula K}
K(w,z) \begin{pmatrix} \a \\ \b \end{pmatrix} 
= \begin{pmatrix} 
A_{12}[w,w] \b + C_{12}[z,z] \b + 2 A_{12}[w,\a] z + 2 C_{12}[z,\b] z
\\
A_{21}[w,w] \a + C_{21}[z,z] \a + 2 A_{21}[w,\a] w + 2 C_{21}[z,\b] w 	
\end{pmatrix},
\end{equation}
namely
\begin{equation*} 
K(w,z) \begin{pmatrix} \a \\ \b \end{pmatrix} 
= M(w,z) \begin{pmatrix} \a \\ \b \end{pmatrix} 
+ E(w,z) \begin{pmatrix} \a \\ \b \end{pmatrix} 
\end{equation*}
where $M(w,z)$ is given in \eqref{formula M} and
\begin{equation*} 
E(w,z) \begin{pmatrix} \a \\ \b \end{pmatrix}  
:= \begin{pmatrix} 2 A_{12}[w,\a] z + 2 C_{12}[z,\b] z \\
2 A_{21}[w,\a] w + 2 C_{21}[z,\b] w \end{pmatrix}.
\end{equation*}
To estimate matrix operators and vectors in $H^s_0(\T^d,c.c.)$, 
we define $\| (w,z) \|_s := \| w \|_s = \| z \|_s$ 
for every pair $(w,z) = (w, \overline{w})$ of complex conjugate functions.  

\begin{lemma}
Let $A_{12}, C_{12}$ be the operators defined in \eqref{fix A12}, 
\eqref{fix C12}, and let $m_0$ be defined in \eqref{def m0}.
For all complex functions $u,v,h$, all real $s \geq 0$, 
\begin{equation} \label{stima A12 basic}
\| A_{12}[u, v] h \|_s 
\leq \frac38 \| u \|_{m_0} \| v \|_{m_0} \| h \|_s,
\quad 
\| C_{12}[u, v] h \|_s 
\leq \frac{1}{16} \| u \|_1 \| v \|_1 \| h \|_s.
\end{equation}
\end{lemma}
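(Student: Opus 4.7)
The plan is to exploit the fact that, for fixed $u,v$, both operators $A_{12}[u,v]$ and $C_{12}[u,v]$ act as \emph{Fourier multipliers} on $h$: the $k$-th Fourier coefficient is $c_A(k)\,h_k$ (resp.\ $c_C(k)\,h_k$), where
\[
c_A(k) := \sum_{\substack{j\neq 0 \\ |j|\neq|k|}} u_j v_{-j} \frac{|j|^2}{8(|j|-|k|)},
\qquad
c_C(k) := \sum_{j\neq 0} u_j v_{-j} \frac{|j|^2}{8(|j|+|k|)}.
\]
Hence $\|A_{12}[u,v]h\|_s \leq \big(\sup_k |c_A(k)|\big)\|h\|_s$ and likewise for $C_{12}$, so the lemma reduces to two uniform scalar bounds in $k$.

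For $c_C(k)$ the denominator never causes trouble: AM--GM gives $\frac{|j|^2}{|j|+|k|} \leq \frac{|j|^{3/2}}{2|k|^{1/2}}$, and Cauchy--Schwarz yields $|c_C(k)| \leq \frac{1}{16|k|^{1/2}}\|u\|_1 \|v\|_{1/2} \leq \frac{1}{16}\|u\|_1\|v\|_1$, using $|k|\geq 1$ and $\|v\|_{1/2}\leq\|v\|_1$.

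The real work is the estimate on $c_A(k)$, since $|j|-|k|$ can be arbitrarily small. In dimension $d=1$, the integrality of $|j|,|k|\in\N$ forces $||j|-|k||\geq 1$ whenever $|j|\neq|k|$, hence $\frac{|j|^2}{||j|-|k||}\leq|j|^2$, and a single Cauchy--Schwarz yields $|c_A(k)|\leq\tfrac{1}{8}\|u\|_1\|v\|_1$, which matches the $m_0=1$ bound. In dimension $d\geq 2$, I would split the sum over $j$ into a \emph{far} part $\{|j|\geq 2|k|\}\cup\{|j|\leq|k|/2\}$ and a \emph{near} part $|k|/2<|j|<2|k|$. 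On the far part $||j|-|k||\geq\tfrac12\max(|j|,|k|)$ gives $\frac{|j|^2}{||j|-|k||}\leq 2|j|$, and Cauchy--Schwarz contributes $\lesssim \|u\|_{1/2}\|v\|_{1/2}\leq\|u\|_{3/2}\|v\|_{3/2}$. On the near part I use the small-divisor estimate $||j|-|k|| = \tfrac{||j|^2-|k|^2|}{|j|+|k|}\geq\tfrac{1}{3|k|}$, coming from $|j|^2,|k|^2\in\N$ with $|j|\neq|k|$; this gives $\frac{|j|^2}{||j|-|k||}\leq 12|k|^3$, and the resulting $|k|^3$ loss is absorbed by Cauchy--Schwarz restricted to the dyadic shell, where the weight $|j|^{-3}\leq (2/|k|)^3$ produces the compensating factor $|k|^{-3}$ at the cost of raising the regularity from $1/2$ to $3/2$ on $u,v$.

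The main obstacle is precisely the near region in dimension $d\geq 2$: heuristically the small divisor $||j|-|k||\sim 1/|k|$ threatens to lose three derivatives, and the dyadic-shell Cauchy--Schwarz spends them on $u$ and $v$ (rather than on $h$), forcing the regularity threshold $m_0=3/2$ in the statement and explaining the dimensional dichotomy anticipated in Remark \ref{rem:Why m0}. All other cases reduce to a direct Cauchy--Schwarz with the bound $\frac{|j|^2}{|\,|j|\pm|k|\,|}\lesssim|j|$.
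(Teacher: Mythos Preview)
Your proposal is correct and rests on the same two pillars as the paper's proof: (i) the Fourier--multiplier structure, reducing everything to a uniform-in-$k$ bound on the scalar coefficient; and (ii) the small-divisor estimate $||j|-|k||=\frac{||j|^{2}-|k|^{2}|}{|j|+|k|}\ge\frac{1}{|j|+|k|}$ in dimension $d\ge2$. The differences are purely in packaging. For $C_{12}$ you use AM--GM to get $\frac{|j|^{2}}{|j|+|k|}\le\frac{|j|^{3/2}}{2|k|^{1/2}}$; the paper simply uses $|j|+|k|\ge2$, which already yields the constant $\tfrac{1}{16}$. For $A_{12}$ in $d\ge2$ you introduce a far/near dyadic split, bounding $\frac{|j|^{2}}{||j|-|k||}\le2|j|$ on the far part and $\le 3|k|\,|j|^{2}$ on the near part, then absorbing the $|k|$ via $|j|^{-1}\le2/|k|$ on the shell. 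The paper avoids the split entirely by the cleaner observation that $\frac{1}{||j|-|k||}\le 3|j|$ holds for \emph{all} $j,k$ with $|j|\neq|k|$: either $||j|-|k||\ge1$, or else $|k|<|j|+1$ and the conjugate trick gives $\frac{1}{||j|-|k||}\le|j|+|k|<2|j|+1\le3|j|$. This single inequality feeds directly into Cauchy--Schwarz and produces the exact constant $\tfrac{3}{8}$ in the statement, whereas your split, as written, gives a somewhat larger universal constant. For the purposes of the paper the precise constant is irrelevant, so your argument is perfectly adequate; the paper's version is just shorter.
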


\begin{proof}
In dimension $d = 1$, one has $| |j| - |k| | \geq 1$ for $|j| \neq |k|$. 
Therefore, by H\"older's inequality, 
\begin{align*}
\| A_{12}[u, v] h \|_s^2 
& = \sum_{k \neq 0} \Big| \sum_{j \neq 0, \, |j| \neq |k|}  
u_j v_{-j} \frac{|j|^2}{8(|j| - |k|)} h_k \Big|^2 |k|^{2s} 
\\ & 
\leq     
\frac{1}{64} \sum_{k \neq 0} \Big( \sum_{j \neq 0, \, |j| \neq |k|}  
|u_j| |j| |v_{-j}| |j| \Big)^2 |h_k|^2 |k|^{2s} 
\leq \frac{1}{64} \| u \|_1^2 \| v \|_1^2 \| h \|_s^2.
\end{align*}
In dimension $d \geq 2$, we observe that
\begin{equation} \label{3j}
\frac{1}{||j| - |k||} \leq 3 |j| 
\quad \forall j,k \in \Z^d \setminus \{ 0 \}, \ |j| \neq |k|.
\end{equation}
If $| |j| - |k| | \geq 1$, then \eqref{3j} holds because $|j| \geq 1$.
Let $| |j| - |k| | < 1$, with $|j| \neq |k|$. 
Then $|k| < |j|+1$, and, 
since $(|j| - |k|)(|j| + |k|) = |j|^2 - |k|^2$ is a nonzero integer, one has 
\[
\frac{1}{||j| - |k||} = \frac{|j| + |k|}{||j|^2 - |k|^2|} 
\leq |j| + |k| < 2 |j| + 1 \leq 3|j|.
\]
Hence 
\begin{align*}
\| A_{12}[u, v] h \|_s^2 
& = \sum_{k \neq 0} \Big| \sum_{j \neq 0, \, |j| \neq |k|}  
u_j v_{-j} \frac{|j|^2}{8(|j| - |k|)} h_k \Big|^2 |k|^{2s} 
\\ & 
\leq     
\sum_{k \neq 0} \Big( \sum_{j \neq 0, \, |j| \neq |k|}  
|u_j| |v_{-j}| \frac38 |j|^3 \Big)^2 |h_k|^2 |k|^{2s} 
\leq \frac{9}{64} \| u \|_{\frac32}^2 \| v \|_{\frac32}^2 \| h \|_s^2.
\end{align*}
To estimate $C_{12}$, we use the bound $8(|j| + |k|) \geq 16$, 
which holds in any dimension.  
\end{proof}

\begin{lemma} 
For all $s \geq 0$, all $(w,z) \in H^{m_0}_0(\T^d, c.c.)$, 
$(\a,\b) \in H^s_0(\T^d,c.c.)$ one has
\begin{align} \label{stima M}
\Big\| M(w,z) \begin{pmatrix} \a \\ \b \end{pmatrix} \Big\|_s 
& \leq \frac{7}{16} \| w \|_{m_0}^2 \|\a \|_s ,
\\
\Big\| K(w,z) \begin{pmatrix} \a \\ \b \end{pmatrix} \Big\|_s 
& \leq \frac{7}{16} \| w \|_{m_0}^2 \|\a \|_s 
+ \frac78 \| w \|_{m_0} \| w \|_s \| \a \|_{m_0} ,
\label{stima K}
\end{align}
where $m_0$ is defined in \eqref{def m0}. 
For $\| w \|_{m_0} < \frac12$, 
the operator $(I + K(w,z)) : H^{m_0}_0(\T^d, c.c.)$ $\to H^{m_0}_0(\T^d, c.c.)$ 
is invertible, with inverse 
\[
(I + K(w,z))^{-1} = I - K(w,z) + \tilde K(w,z), \quad 
\tilde K(w,z) := \sum_{n=2}^\infty (- K(w,z))^n,
\]
satisfying
\begin{equation*} 
\Big\| (I + K(w,z))^{-1} \begin{pmatrix} \a \\ \b \end{pmatrix} \Big\|_s 
\leq C (\| \a \|_s + \| w \|_{m_0} \| w \|_s \| \a \|_{m_0}),
\end{equation*}
for all $s \geq 0$, where $C$ is a universal constant.
\end{lemma}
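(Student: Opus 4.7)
The plan is to prove the three estimates in sequence, each reducing to \eqref{stima A12 basic} together with, for the last, a Neumann-series argument confined first to $H^{m_0}_0$ and then extended tamely to $H^s_0$. Throughout I will use that on the real subspace $(w,z) = (w, \overline{w})$, and likewise $(\alpha,\beta) = (\alpha, \overline{\alpha})$, one has $\|z\|_s = \|w\|_s$ and $\|\beta\|_s = \|\alpha\|_s$ for every $s \geq 0$, so the $c.c.$-norm of the output of $M$ or $K$ just equals the $H^s_0$-norm of either of its two components; moreover, since $m_0 \geq 1$, $\|w\|_1 \leq \|w\|_{m_0}$ and $\|\alpha\|_1 \leq \|\alpha\|_{m_0}$.

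First, for \eqref{stima M}: by \eqref{formula M} together with the assignments \eqref{fix A21 C21}, the first component of $M(w,z)(\alpha,\beta)$ is $A_{12}[w,w]\beta + C_{12}[z,z]\beta$, and the second component is the analogous expression with $w,z,\alpha,\beta$ swapped in the appropriate way. Apply \eqref{stima A12 basic} with $h = \beta$ to get
\[
\|A_{12}[w,w]\beta\|_s \leq \tfrac{3}{8}\|w\|_{m_0}^2\|\alpha\|_s, \qquad
\|C_{12}[z,z]\beta\|_s \leq \tfrac{1}{16}\|z\|_1^2 \|\alpha\|_s \leq \tfrac{1}{16}\|w\|_{m_0}^2\|\alpha\|_s,
\]
whose sum is $\tfrac{7}{16}\|w\|_{m_0}^2\|\alpha\|_s$. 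This gives \eqref{stima M}. For \eqref{stima K}, decompose $K = M + E$ as in the text; the $M$-piece is already controlled, and for the first component of $E$ I apply \eqref{stima A12 basic} now with $h = z$, obtaining
\[
\|2 A_{12}[w,\alpha] z\|_s \leq \tfrac{3}{4}\|w\|_{m_0}\|\alpha\|_{m_0}\|w\|_s, \qquad
\|2 C_{12}[z,\beta] z\|_s \leq \tfrac{1}{8}\|w\|_{m_0}\|\alpha\|_{m_0}\|w\|_s,
\]
summing to $\tfrac{7}{8}\|w\|_{m_0}\|w\|_s\|\alpha\|_{m_0}$; the second component of $E$ is estimated symmetrically. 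Adding the $M$-bound yields \eqref{stima K}.

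For the last part, set $\delta := \tfrac{7}{16}\|w\|_{m_0}^2$, $\mu := \tfrac{7}{8}\|w\|_{m_0}\|w\|_s$, and $\delta_0 := \tfrac{21}{16}\|w\|_{m_0}^2$, so \eqref{stima K} reads $\|K(\alpha,\beta)\|_s \leq \delta\|\alpha\|_s + \mu \|\alpha\|_{m_0}$ and, specializing to $s = m_0$, $\|K(\alpha,\beta)\|_{m_0} \leq \delta_0 \|\alpha\|_{m_0}$. When $\|w\|_{m_0} < 1/2$, $\delta_0 < 1/2$, so the Neumann series $\sum_{n \geq 0}(-K)^n$ converges in operator norm on $H^{m_0}_0$ and inverts $I+K$ there. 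To obtain the $s$-estimate for any $s \geq 0$ (the only interesting case being $s > m_0$), I iterate the tame inequality to prove by induction on $n$, using $\delta \leq \delta_0$,
\[
\|K^n(\alpha,\beta)\|_s \leq \delta_0^n \|\alpha\|_s + n \mu \delta_0^{n-1}\|\alpha\|_{m_0};
\]
summing the resulting geometric and arithmetico-geometric series and using $\delta_0 < 1/2$ gives $\|(I+K)^{-1}(\alpha,\beta)\|_s \leq (1-\delta_0)^{-1}\|\alpha\|_s + \mu (1-\delta_0)^{-2}\|\alpha\|_{m_0}$, which absorbs into the stated form with a universal constant $C$. The only slightly delicate step is this iteration: one cannot expect the operator norm of $K$ on $H^s_0$ to be $<1$ (since $\mu$ may be large), and it is precisely the \emph{tame} structure of \eqref{stima K} — linearity in $\|w\|_s$ on the lower-order term and smallness of the highest-order coefficient $\delta_0$ — that prevents the factor $\|w\|_s$ from multiplying itself under iteration.
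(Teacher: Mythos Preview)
Your proof is correct and follows exactly the approach the paper indicates (``Use \eqref{formula M}, \eqref{formula K}, \eqref{stima A12 basic} and Neumann series''): you have simply spelled out the arithmetic $\tfrac38+\tfrac1{16}=\tfrac7{16}$ and $\tfrac34+\tfrac18=\tfrac78$ and made explicit the standard tame iteration $\|K^n(\alpha,\beta)\|_s \leq \delta_0^n\|\alpha\|_s + n\mu\delta_0^{n-1}\|\alpha\|_{m_0}$ for the Neumann series, which is precisely what the paper's one-line proof leaves to the reader.
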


\begin{proof}
Use \eqref{formula M}, \eqref{formula K}, \eqref{stima A12 basic} and Neumann series.
\end{proof}

By contraction lemma, we prove that the nonlinear, continuous map $\Phi^{(4)}$ 
is invertible in a ball around the origin. 

\begin{lemma} \label{lemma:Phi4 inv}
For all $(\eta, \psi) \in H^{m_0}_0(\T^d, c.c.)$ 
in the ball $\| \eta \|_{m_0} \leq \frac14$, 
there exists a unique $(w,z) \in H^{m_0}_0(\T^d, c.c.)$ such that 
$\Phi^{(4)}(w,z) = (\eta,\psi)$, with $\| w \|_{m_0} \leq 2 \| \eta \|_{m_0}$. 
If, in addition, $\eta \in H^s_0$ for some $s > m_0$, then $w$ also belongs to $H^s_0$, 
and $\| w \|_s \leq 2 \| \eta \|_s$.
This defines the continuous inverse map $(\Phi^{(4)})^{-1} : H^s_0(\T^d, c.c.) \cap 
\{ \| \eta \|_{m_0} \leq \frac14 \}$ $\to H^s_0(\T^d, c.c.)$.
\end{lemma}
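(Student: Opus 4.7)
The plan is to apply the Banach fixed point theorem to a reformulation of the equation $\Phi^{(4)}(w,z) = (\eta,\psi)$ that takes advantage of the real structure. First I would observe that, thanks to \eqref{fix A21 C21} and to the fact that the coefficients $a_{12}(j,k), c_{12}(j,k)$ in \eqref{fix a12 c12} are real and symmetric under $j \leftrightarrow -j$, $k \leftrightarrow -k$, a direct Fourier computation gives $\overline{A_{12}[w,w]\overline{w}} = A_{12}[\overline{w},\overline{w}]w$ and $\overline{C_{12}[\overline{w},\overline{w}]\overline{w}} = C_{12}[w,w]w$. Hence $\Phi^{(4)}$ preserves the subspace $\{z = \overline{w}\}$, so that solving $\Phi^{(4)}(w,\overline{w}) = (\eta,\overline{\eta})$ reduces to the single scalar fixed point equation
\[
w = \mathcal{T}(w) := \eta - \bigl(A_{12}[w,w] + C_{12}[\overline{w},\overline{w}]\bigr)\overline{w}.
\]

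Next I would show that $\mathcal{T}$ is a contraction on the closed ball $B := \{w \in H^{m_0}_0(\T^d,\C) : \|w\|_{m_0} \leq 2\|\eta\|_{m_0}\}$. The self-map property is immediate from \eqref{stima A12 basic} (using $\|\cdot\|_1 \leq \|\cdot\|_{m_0}$ for the $C_{12}$ piece): one gets $\|\mathcal{T}(w)\|_{m_0} \leq \|\eta\|_{m_0} + \tfrac{7}{16}\|w\|_{m_0}^3$, which is bounded by $2\|\eta\|_{m_0}$ as soon as $\|\eta\|_{m_0} \leq 1/4$. For the Lipschitz property I would split
\[
A_{12}[w_1,w_1]\overline{w_1} - A_{12}[w_2,w_2]\overline{w_2} = A_{12}[w_1{-}w_2,w_1]\overline{w_1} + A_{12}[w_2,w_1{-}w_2]\overline{w_1} + A_{12}[w_2,w_2](\overline{w_1}-\overline{w_2}),
\]
and analogously for the $C_{12}$ contribution, then apply \eqref{stima A12 basic} term by term to obtain a Lipschitz constant of size $O(\|\eta\|_{m_0}^2)$, strictly less than one on $B$. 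Banach's theorem then yields a unique fixed point $w \in B$ with $\|w\|_{m_0} \leq 2\|\eta\|_{m_0}$; setting $z := \overline{w}$ gives the required preimage.

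For the persistence of higher Sobolev regularity I would rerun the same contraction in the smaller closed set $B_s := B \cap \{w \in H^s_0(\T^d,\C) : \|w\|_s \leq 2\|\eta\|_s\}$. Invariance is guaranteed by the sharper trilinear bound $\|M_{12}(w,\overline{w})\overline{w}\|_s \leq \tfrac{7}{16}\|w\|_{m_0}^2\,\|w\|_s$, which is what \eqref{stima M} becomes when applied to $(\alpha,\beta) = (\overline{w},w)$: under $\|w\|_{m_0} \leq 1/2$ this gives $\|\mathcal{T}(w)\|_s \leq \|\eta\|_s + \tfrac{7}{64}\|w\|_s \leq 2\|\eta\|_s$. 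The $H^s_0$ fixed point must coincide with the one already produced in $H^{m_0}_0$ by uniqueness in $B$, so $\|w\|_s \leq 2\|\eta\|_s$; continuity of $(\Phi^{(4)})^{-1}$ is a byproduct of the contraction, since the fixed point depends Lipschitz continuously on the parameter $\eta$. I do not anticipate a genuine obstacle: the only non-routine point is verifying that the reality constraint $z=\overline{w}$ is preserved, and the decisive trilinear estimate—bilinear in the low norm $H^{m_0}$ and linear in the high norm $H^s$—is already packaged in \eqref{stima A12 basic}, so the rest is standard Picard iteration.
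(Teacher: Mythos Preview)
Your existence argument in $H^{m_0}_0$ is correct and essentially the same as the paper's: both recast $\Phi^{(4)}(w,z)=(\eta,\psi)$ as a fixed point problem and contract on the ball of radius $R=2\|\eta\|_{m_0}$ with Lipschitz constant $\tfrac{21}{16}R^2<1$. Reducing to the scalar equation for $w$ via the real structure is a clean simplification that the paper does not bother with.

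The regularity step, however, has a gap: you cannot simply ``rerun the same contraction in $B_s$''. With the $H^s$ metric on $B_s$ the map $\mathcal{T}$ is \emph{not} a contraction: in your telescoping of $A_{12}[w_1,w_1]\overline{w_1}-A_{12}[w_2,w_2]\overline{w_2}$, the first two terms are bounded by $\tfrac38\|w_1-w_2\|_{m_0}\|w_i\|_{m_0}\|w_i\|_s$, and after replacing $\|w_1-w_2\|_{m_0}$ by $\|w_1-w_2\|_s$ the resulting coefficient is of order $\|w_i\|_{m_0}\|w_i\|_s\sim\|\eta\|_s$, which is not small. If instead you equip $B_s$ with the $H^{m_0}$ metric (in which $\mathcal{T}$ \emph{is} contractive), then $B_s$ is not closed, so Banach's theorem does not apply. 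The paper confronts exactly this issue: using the \emph{tame} Lipschitz bound coming from \eqref{stima K}, it proves by induction that the increments of the Picard iterates satisfy $\|w_{n+1}-w_n\|_s\leq B^n\|\eta\|_s+nB^{n-1}A_s\|\eta\|_{m_0}$ with $B=\tfrac{21}{16}R^2<1$, and from this deduces that $(w_n)$ is Cauchy in $H^s$. A shorter fix, closer to your outline, is to observe that your invariance estimate keeps the iterates in the $H^s$ ball of radius $2\|\eta\|_s$ while they converge in $H^{m_0}$ to $w$; weak-$*$ compactness of closed balls in $H^s$ then gives $w\in H^s$ with $\|w\|_s\leq 2\|\eta\|_s$. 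Either way, an extra ingredient beyond the bare contraction is required.
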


\begin{proof} 
\emph{Existence}.
Given $(\eta,\psi)$, the problem of finding $(w,z)$ such that $\Phi^{(4)}(w,z) = (\eta,\psi)$ 
is the fixed point problem $\Psi(w,z) = (w,z)$, where 
\[
\Psi(w,z) := \begin{pmatrix} \eta \\ \psi \end{pmatrix} 
- M(w,z) \begin{pmatrix} w \\ z \end{pmatrix}.
\]
Let $B_R := \{ (w,z) \in H^{m_0}_0(\T^d,c.c.) : \| w \|_{m_0} \leq R \}$. 
By \eqref{stima M}, $\Psi$ maps $B_R \to B_R$ if $\| \eta \|_{m_0} + \frac{7}{16} R^3 \leq R$. 
Since 
\begin{align}
& M(w_1, z_1) \begin{pmatrix} w_1 \\ z_1 \end{pmatrix}
- M(w_2, z_2) \begin{pmatrix} w_2 \\ z_2 \end{pmatrix}
\notag \\ & \quad 
= \int_0^1 K \big( w_2 + \th (w_1 - w_2) ,  z_2 + \th (z_1 - z_2) \big) \, d\th 
\begin{pmatrix} w_1 - w_2 \\ z_1 - z_2 \end{pmatrix},
\label{Lip}
\end{align}
by \eqref{stima K} $\Psi$ is a contraction if $\frac{21}{16} R^2 < 1$. 
We choose $R = 2 \| \eta \|_{m_0}$, so that $\Psi$ is a contraction in $B_R$ 
if $\| \eta \|_{m_0} \leq \frac14$. 
As a consequence, there exists a unique fixed point $(w,z) = \Psi(w,z)$ 
in $B_R$, with $\| w \|_{m_0} \leq R = 2 \| \eta \|_{m_0}$. 

\emph{Regularity}. Assume, in addition, that $\eta \in H^s$. 
The fixed point $w$ is the limit in $H^{m_0}$ of the sequence $w_n := \Psi(w_{n-1})$, $w_0 := 0$. 
We write $w$ as the sum of the telescoping series $\sum_{n=0}^\infty h_n$, 
which converges in $H^{m_0}$, where $h_n := w_{n+1} - w_n$. 
Since $\eta \in H^s$ and $\Psi$ maps $H^s \to H^s$, 
then $w_n \in H^s$ for all $n$. 
By \eqref{Lip}, 
\begin{equation} \label{hn low}
\| h_n \|_{m_0} \leq B^n \| h_0 \|_{m_0} \quad \forall n \geq 0, 
\end{equation}
where $B := \frac{21}{16} R^2$. Note that $h_0 = w_1 = \eta$. 
By induction, we prove that 
\begin{equation} \label{i ii}
(i) \  
\| w_n \|_s \leq \rho_s; 
\qquad 
(ii) \  
\| h_n \|_s \leq B^n \| h_0 \|_s + n B^{n-1} A_s \| h_0 \|_{m_0}
\end{equation}
for some constants $\rho_s, A_s$ to determine. 

At $n=0$ \eqref{i ii} trivially holds. 
At $n=1$, $(i)$ holds if $\rho_s \geq \| \eta \|_s$, 
and $(ii)$ holds because, by \eqref{stima M},
$\| h_1 \|_s = \| M(\eta,\psi) \binom{\eta}{\psi} \|_s 
\leq \frac{7}{16} \| \eta \|_{m_0}^2 \| \eta \|_s$ 
and $h_0 = \eta$. 

Assume that \eqref{i ii} holds for all $k \leq n$, for some $n \geq 1$. 
Using \eqref{Lip}, \eqref{stima K}, $(i)_n$ and $(i)_{n-1}$,
we deduce that 
$\| h_{n+1} \|_s 
\leq \frac{7}{16} R^2 \| h_n \|_s + \frac78 R \rho_s \| h_n \|_{m_0}$. 
Using $(ii)_n$ and \eqref{hn low}, this is 
$\leq (\frac{7}{16} R^2 B^n) \| h_0 \|_s 
+ (\frac{7}{16} R^2 n B^{n-1} A_s + \frac78 R \rho_s B^n ) \| h_0 \|_{m_0}$.
Since $B = \frac{21}{16} R^2$, $(ii)_{n+1}$ holds provided that $\frac78 R \rho_s \leq A_s$. 
We fix $A_s = \frac78 R \rho_s$. 

To prove $(i)_{n+1}$, we use $(ii)_k$ for $k=0,\ldots,n$, 
and we estimate 
$
\| w_{n+1} \|_s \leq \sum_{k=0}^n \| h_k \|_s 
\leq \sum_{k=0}^n B^k \| h_0 \|_s + \sum_{k=0}^n k B^{k-1} A_s \| h_0 \|_{m_0}
\leq \frac{1}{1-B}\, \| h_0 \|_s + \frac{1}{(1-B)^2} \, \frac78 R \rho_s \| h_0 \|_{m_0}.
$
Hence $(i)_{n+1}$ holds by choosing $\rho_s = 2 \| \eta \|_s$. 
The proof of \eqref{i ii} is complete. 

As a consequence, $w_n$ is a Cauchy sequence in $H^s$, and its limit $w$ satisfies 
$\| w \|_s \leq \rho_s = 2 \| \eta \|_s$. 

\emph{Continuity}. 
The inverse map $(\Phi^{(4)})^{-1}$ is Lipschitz-continuous because it is constructed
as a solution of the fixed point problem (recall \eqref{Lip}).
\end{proof}

\begin{lemma}  \label{lemma:3001.1}
For all complex functions $u,v,y,h$, one has 
\begin{alignat}{2} 
\label{A12 C12 self-adj}
\la A_{12}[u,v] y , h \ra 
& = \la y , A_{12}[u,v] h \ra, 
\quad \ \ & 
\la C_{12}[u,v] y , h \ra 
& = \la y , C_{12}[u,v] h \ra,
\\
\label{A12 C12 conj}
\overline{A_{12}[u,v] y} 
& = A_{12}[ \overline{u}, \overline{v} ] \overline{y}, 
\quad & 
\overline{C_{12}[u,v] y} 
& = C_{12}[ \overline{u}, \overline{v} ] \overline{y},
\\
\label{A12 C12 commu}
A_{12} [u,v] \Lm^s y 
& = \Lm^s A_{12}[u,v] y, 
\quad & 
C_{12} [u,v] \Lm^s y 
& = \Lm^s C_{12}[u,v] y
\end{alignat}
where $\overline{u}$ is the complex conjugate of $u$, and so on.
As a consequence, for all complex functions $w,z,y,h$, one has 
\begin{alignat}{2} 
\label{M12 M21 self-adj}
\la M_{12}(w,z) y , h \ra & = \la y , M_{12}(w,z) h \ra, 
\quad &
\la M_{21}(w,z) y , h \ra & = \la y , M_{21}(w,z) h \ra,
\\ 
\label{M12 M21 conj}
\overline{M_{12}(w,z)h} & = M_{12}(\overline{w}, \overline{z}) \overline{h}, 
\quad &
\overline{M_{21}(w,z)h} & = M_{21}(\overline{w}, \overline{z}) \overline{h}, 
\\ 
\label{M12 M21 commu}
[ M_{12}(w,z) , \Lm^s ] & = 0, \quad & 
[ M_{21}(w,z) , \Lm^s ] & = 0.
\end{alignat}
Moreover, for all complex $w,z,h$, 
\begin{equation} \label{M12(f,g)=M21(g,f)}
M_{12}(w,z) h = M_{21}(z,w) h
\end{equation}
and 
\begin{equation} \label{anticommu MD}
M(w,z) \mD_1 + \mD_1 M(w,z) = 0.
\end{equation}
\end{lemma}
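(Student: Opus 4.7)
The plan is to read off all of \eqref{A12 C12 self-adj}--\eqref{A12 C12 commu} directly from the explicit Fourier formulas \eqref{fix A12}--\eqref{fix C12}, observing that in both $A_{12}[u,v]$ and $C_{12}[u,v]$ the argument $h$ enters only through its Fourier coefficients $h_k$ multiplied by a scalar $m(k)$ that depends on $(u,v)$ and on $|k|$. Concretely, I would write
\[
(A_{12}[u,v] h)_k = m_A(k)\, h_k, \qquad
m_A(k) = \sum_{j \neq 0,\, |j| \neq |k|} u_j v_{-j}\, \frac{|j|^2}{8(|j|-|k|)},
\]
and analogously $(C_{12}[u,v] h)_k = m_C(k)\, h_k$. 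The key point is that $m_A(k)$ and $m_C(k)$ depend on $k$ only through $|k|$, so $m_A(-k)=m_A(k)$ and $m_C(-k)=m_C(k)$. Then \eqref{A12 C12 self-adj} follows from $\la f,g\ra = \sum_k f_k g_{-k}$ because
\[
\la A_{12}[u,v] y , h \ra = \sum_k m_A(k)\, y_k h_{-k}
= \sum_k y_k\, m_A(-k)\, h_{-k} = \la y, A_{12}[u,v] h \ra,
\]
and \eqref{A12 C12 commu} is immediate since $\Lm^s$ acts on the $k$-th Fourier coefficient by $|k|^s$, which commutes with multiplication by $m_A(k)$. For \eqref{A12 C12 conj} I would use $(\overline{u})_j = \overline{u_{-j}}$, substitute $j \to -j$ and $k \to -k$ in the sum defining $A_{12}[\overline{u},\overline{v}]\overline{y}$, and invoke the fact that the multiplier $|j|^2/(8(|j|-|k|))$ is real and $j\mapsto |j|$ is even; the same substitution works for $C_{12}$.

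Next, the statements \eqref{M12 M21 self-adj}--\eqref{M12 M21 commu} are obtained by linearity, since $B_{12} = B_{21} = 0$ (see \eqref{fix A21 C21}) implies
\[
M_{12}(w,z) = A_{12}[w,w] + C_{12}[z,z], \qquad
M_{21}(w,z) = A_{21}[w,w] + C_{21}[z,z] = C_{12}[w,w] + A_{12}[z,z],
\]
where in the last identity I used the prescription $A_{21} = C_{12}$, $C_{21} = A_{12}$ from \eqref{fix A21 C21}. The identity \eqref{M12(f,g)=M21(g,f)} is then immediate by inspection: swapping $w$ and $z$ in $M_{12}(w,z)$ gives exactly $M_{21}(w,z)$ read in the variables $(z,w)$.

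Finally, for the anticommutation \eqref{anticommu MD}, I would exploit the off-diagonal structure \eqref{formula M} of $M(w,z)$ together with the diagonal structure of $\mD_1 = \diag(-i\Lm, i\Lm)$. A direct $2\times 2$ block multiplication gives
\[
M(w,z)\mD_1 + \mD_1 M(w,z)
= \begin{pmatrix} 0 & i\,[M_{12}(w,z),\Lm] \\ -\,i\,[M_{21}(w,z),\Lm] & 0 \end{pmatrix},
\]
and both commutators vanish by \eqref{M12 M21 commu} with $s=1$. There is no real obstacle anywhere: the whole lemma is a bookkeeping exercise using the explicit form of the multipliers, the symmetry $m(-k)=m(k)$, and the reality of the coefficients. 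The only mild care needed is to keep track of the index substitution $j\to -j$, $k\to -k$ in \eqref{A12 C12 conj}, and to use \eqref{fix A21 C21} when converting identities for $A_{12}, C_{12}$ into identities for $M_{21}$.
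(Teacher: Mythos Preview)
Your proposal is correct and follows exactly the approach indicated in the paper, which simply states that \eqref{A12 C12 self-adj}--\eqref{M12(f,g)=M21(g,f)} follow directly from the explicit coefficients \eqref{fix a12 c12} and the relations \eqref{fix A21 C21}, \eqref{formula M}, and that \eqref{anticommu MD} follows from \eqref{M12 M21 commu}. You have merely spelled out the Fourier-multiplier computations that the paper leaves implicit; in particular, your observation that $m_A(k)$, $m_C(k)$ depend on $k$ only through $|k|$ is precisely the mechanism behind all the identities, and your $2\times 2$ block computation for \eqref{anticommu MD} is the intended argument.
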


\begin{proof} 
All \eqref{A12 C12 self-adj}-\eqref{M12(f,g)=M21(g,f)}
directly follow from the definition \eqref{fix a12 c12} 
of the coefficients $a_{12}(j,k), c_{12}(j,k)$  
and from \eqref{fix A21 C21}, \eqref{formula M}.
The anti-commutator identity \eqref{anticommu MD} follows from \eqref{M12 M21 commu}.
\end{proof}

\begin{lemma} \label{lemma:3001.2}
The maps $M(w, \overline{w})$, $K(w,\overline{w})$, 
and the transformation $\Phi^{(4)}$ 
preserve the structure of real vector field \eqref{real vector field}.
Hence $X^+$ defined in \eqref{def X+} satisfies \eqref{real vector field}.
\end{lemma}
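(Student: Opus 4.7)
The plan is to propagate the real-structure identity \eqref{real vector field} through the algebraic data collected in Lemma~\ref{lemma:3001.1}. Say that an operator $T(w,\overline w)$ preserves the real structure if, for every conjugate pair $(\alpha,\overline\alpha)$, the output $T(w,\overline w)(\alpha,\overline\alpha)^\top$ is again a conjugate pair. Since $M_{11}=M_{22}=0$, for the matrix $M(w,\overline w)$ I only need to look at the off-diagonal entries; \eqref{M12 M21 conj} followed by \eqref{M12(f,g)=M21(g,f)} gives
\[
\overline{M_{12}(w,\overline w)\overline\alpha}=M_{12}(\overline w,w)\alpha=M_{21}(w,\overline w)\alpha,
\]
which is the second component of $M(w,\overline w)(\alpha,\overline\alpha)^\top$, proving that $M(w,\overline w)$ preserves the structure. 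Applied with $\alpha=w$, this also shows that $\Phi^{(4)}(w,\overline w)=(w,\overline w)+M(w,\overline w)(w,\overline w)^\top$ is a conjugate pair, i.e.\ $\Phi^{(4)}$ sends conjugate pairs to conjugate pairs.

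For $K(w,\overline w)$ the same strategy works term by term on the explicit expression \eqref{formula K}: one checks that the conjugate of the first component equals the second, using \eqref{A12 C12 conj} together with $A_{21}=C_{12}$, $C_{21}=A_{12}$ from \eqref{fix A21 C21}. The mild subtlety here is that the bilinear operators $A_{ij}[w,\alpha]$ involve two distinct complex arguments, but \eqref{A12 C12 conj} is stated for arbitrary complex inputs, so this causes no difficulty. Once $K(w,\overline w)$ is known to preserve the structure, the Neumann expansion introduced before Lemma~\ref{lemma:Phi4 inv} shows that every finite power $K(w,\overline w)^n$ does as well, and passing to the $H^{m_0}$-convergent sum gives the same property for $(I+K(w,\overline w))^{-1}$.

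For the final claim I would simply compose these ingredients. The vector field $X$ of system \eqref{syst 6 dic} satisfies \eqref{real vector field} by construction, since the second equation there is the complex conjugate of the first. As $\Phi^{(4)}(w,\overline w)$ is a conjugate pair, $X(\Phi^{(4)}(w,\overline w))$ is again a conjugate pair, and then $(I+K(w,\overline w))^{-1}$ applied to it preserves that structure; therefore $X^+(w,\overline w)$ satisfies \eqref{real vector field}. The one place where a miscalculation could creep in is the bookkeeping of which arguments of $A_{12}, C_{12}$ get conjugated and which get swapped when one invokes Lemma~\ref{lemma:3001.1}; once that is set up carefully, the rest is a mechanical propagation of reality through composition and Neumann inversion.
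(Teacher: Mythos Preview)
Your argument is correct and is precisely the detailed unpacking of the paper's one-line proof ``It follows from Lemma~\ref{lemma:3001.1}'': you use exactly the identities \eqref{A12 C12 conj}, \eqref{M12 M21 conj}, \eqref{M12(f,g)=M21(g,f)} and \eqref{fix A21 C21} that the paper intends, and the propagation through the Neumann series and the composition with $X$ is the only natural route.
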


\begin{proof} 
It follows from Lemma \ref{lemma:3001.1}. 
\end{proof}

For a system $\pa_t (w, \overline{w}) = \mF(w, \overline{w})$ 
where the vector field $\mF = (\mF_1, \mF_2)$ satisfies \eqref{real vector field}, 
the Sobolev norm of any solution evolves in time according to
\begin{align} 
\pa_t (\| w \|_s^2) 
& = \la \Lm^s \mF_1(w, \overline{w}) , \Lm^s \overline{w} \ra 
+ \la \Lm^s w , \Lm^s \mF_2(w, \overline{w}) \ra
\notag \\
& = 2 \Re \la \Lm^s \mF_1(w, \overline{w}) , \Lm^s \overline{w} \ra.
\label{generic en est}
\end{align}
The vector field $X^+$ in \eqref{X+} is
\begin{align}
X^+(w,z) 
& = \mD_1(w,z)  
- K(w,z) \mD_1 \Big( M(w,z) \begin{pmatrix} w \\ z \end{pmatrix} \Big)
+ \tilde K(w,z) \mD_1( \Phi^{(4)}(w,z))
\notag \\ & \quad \ 
+ X_3^+(w,z)
+ (I + K(w,z))^{-1} \mD_{\geq 3} ( \Phi^{(4)}(w,z))
+ \mR_{\geq 5}^+(w,z)
\label{X+ bis}
\end{align}
where
\begin{align*}
\mR_{\geq 5}^+(w,z)
& := (I + K(w,z))^{-1} \mR_{\geq 5} ( \Phi^{(4)}(w,z))
+ [ \mB_3 ( \Phi^{(4)}(w,z)) - \mB_3(w,z) ]
\notag \\ & \qquad 
+ \big( - K(w,z) + \tilde K(w,z) \big) \mB_3 ( \Phi^{(4)}(w,z)).
\end{align*}
By \eqref{anticommu MD}, equation \eqref{homological} becomes 
\begin{equation} \label{magic}
\big( M(w,z) + K(w,z) \big) \mD_1 \begin{pmatrix} w \\ z \end{pmatrix} 
= \mB_3(w,z) - X_3^+(w,z).
\end{equation}
We use \eqref{magic} to rewrite the terms in \eqref{X+ bis} containing $\mD_1$, $\mD_{\geq 3}$. 
At a first glance, these terms seem to be unbounded, 
as $\mD_1, \mD_{\geq 3}$ are operators of order one,  
but, using \eqref{magic}, it becomes clear that they are, in fact, bounded.
Omitting to write $\binom{w}{z}$ and $(w,z)$, 
identity \eqref{magic} is $(M+K)\mD_1 = \mB_3 - X_3^+$, 
the anti-commutator formula \eqref{anticommu MD} is $M \mD_1 + \mD_1 M = 0$,
and therefore we have
\begin{align}
- K \mD_1 M + \tilde K \mD_1 (I + M)
& = K M \mD_1 + \sum_{n=2}^\infty (- K)^n \mD_1 
+ \sum_{n=2}^\infty (-K)^n \mD_1 M
\notag \\ 
& = K M \mD_1 + \sum_{n=2}^\infty (- K)^n \mD_1 
- \sum_{n=2}^\infty (-K)^n M \mD_1
\notag \\ 
& = - \sum_{n=1}^\infty (-K)^n M \mD_1 
- \sum_{n=1}^\infty (-K)^n K \mD_1
\notag \\ 
& = - \sum_{n=1}^\infty (-K)^n (M+K) \mD_1
\notag \\
& = K (I+K)^{-1} (\mB_3 - X_3^+).
\label{2nd 3rd after magic}
\end{align}
Regarding the terms with $\mD_{\geq 3}$, recalling \eqref{def mD geq 3} 
one has 
\begin{equation} \label{mD geq 3 composta}
\mD_{\geq 3}(\Phi^{(4)}(w,z)) 
= \mP(w,z) \mD_1 (\Phi^{(4)}(w,z)) 
\end{equation}
where 
\begin{equation} \label{def mP}
\mP(w,z) := \sqrt{1 + 2 P(\Phi^{(4)}(w,z))} \, - 1.
\end{equation}
We recall that $P$ is defined in \eqref{Q ph Q}, \eqref{def Q(u,v)}, \eqref{def ph}, 
and it is a function of $t$ only (i.e., it does not depend on $x$).
We write \eqref{mD geq 3 composta} as
$\mD_{\geq 3}(I+M) = \mP \mD_1(I+M)$, where $\mP$ is the multiplication operator 
$\mP h = \mP(w,z) h$. 
Using the identities $(M+K)\mD_1 = \mB_3 - X_3^+$ and $M \mD_1 + \mD_1 M = 0$, 
namely \eqref{magic} and \eqref{anticommu MD}, 
and the fact that $\mP K = K \mP$ (because $\mP$ is a function of time only), 
we calculate
\begin{align}
(I+K)^{-1} \mD_{\geq 3} \Phi^{(4)}
& = (I+K)^{-1} \mP \mD_1(I+M) 
\notag \\ 
& = \mP \Big( \sum_{n=0}^\infty (-K)^n \mD_1 + \sum_{n=0}^\infty (-K)^n \mD_1 M \Big)
\notag \\ 
& = \mP \mD_1 
- \mP \Big( \sum_{n=0}^\infty (-K)^n K \mD_1 + \sum_{n=0}^\infty (-K)^n M \mD_1 \Big)
\notag \\ 
& = \mP \mD_1 - \mP (I+K)^{-1} (K + M) \mD_1
\notag \\ 
& = \mP \mD_1 - \mP (I+K)^{-1} (\mB_3 - X_3^+).
\label{5th after magic}
\end{align}
By \eqref{2nd 3rd after magic} and \eqref{5th after magic}, 
the vector field $X^+$ in \eqref{X+ bis} becomes 
\begin{equation} \label{X+ ter}
X^+(w,z) 
= \big( 1 + \mP(w,z) \big) \mD_1(w,z) + X_3^+(w,z)
+ X_{\geq 5}^+(w,z)
\end{equation}
where 
\begin{align} 
X_{\geq 5}^+(w,z)
& := K(w,z) \big( I+K(w,z) \big)^{-1} \big( \mB_3(w,z) - X_3^+(w,z) \big)
+ \mR_{\geq 5}^+(w,z)
\notag \\ & \quad \ 
- \mP(w,z) \big( I+K(w,z) \big)^{-1} \big( \mB_3(w,z) - X_3^+(w,z) \big).
\label{def X+ geq 5}
\end{align}

To analyze the energy estimate \eqref{generic en est} for $\mF = X^+$,
we prove that the contribution of $(1 + \mP)\mD_1$ and $X_3^+$ is zero,
and the one of all the other terms is quintic.
Since $\mP = \mP(w,z)$ is a function of time only, one simply has 
\begin{equation*} 
\la \Lm^s (1 + \mP) (- i \Lm w) , \Lm^s z \ra 
+ \la \Lm^s w , \Lm^s (1 + \mP) i \Lm z \ra = 0.
\end{equation*}
Next, recalling \eqref{X3+ 1} and \eqref{X3+ 2}, one has 
\begin{align}
& \la \Lm^s (X_3^+)_1, \Lm^s z \ra + \la \Lm^s w , \Lm^s (X_3^+)_2 \ra 
\notag \\ 
& = - \frac{i}{4} 
\sum_{\begin{subarray}{c} j,k \in \Z^d \setminus \{ 0 \} \\ |k| = |j| \end{subarray}} 
w_j w_{-j} |j|^2 z_k |k|^{2s} z_{-k} 
+ \frac{i}{4} \sum_{\begin{subarray}{c} j,k \in \Z^d \setminus \{ 0 \} \\ |k| = |j| \end{subarray}} 
z_j z_{-j} |j|^2 w_k |k|^{2s} w_{-k}
= 0
\label{below is here}
\end{align}
(rename $j \leftrightarrow k$ in the second sum and use $|j| = |k|$).
To estimate the contribution of $X_{\geq 5}^+$, 
we collect a few elementary estimates in the next lemma.

\begin{lemma} \label{lemma:elementary}
For all $s \geq 0$, all pairs of complex conjugate functions $(w,z)$, 
one has
\begin{equation} \label{stima mB3 X3+}
\| \mB_3(w,z) \|_s \leq \frac12 \| w \|_1^2 \| w \|_s, 
\quad 
\| X_3^+(w,z) \|_s \leq \frac14 \| w \|_1^2 \| w \|_s, 
\end{equation}
and, for $\| w \|_{m_0} \leq \frac12$, for all complex functions $h$,  
\begin{align} \label{stima mP}
\| \mP(w,z) h \|_s & = \mP(w,z) \| h \|_s,
\quad 
0 \leq \mP(w,z) \leq C \| w \|_{\frac12}^2,
\\ 
\label{stima mR geq 5}
\| \mR_{\geq 5}(w,z) \|_s 
& \leq 2 P(w,z) \| \mB_3(w,z) \|_s 
\leq C \| w \|_{\frac12}^2 \| w \|_1^2 \| w \|_s
\end{align}
where $\mR_{\geq 5}$ is defined in \eqref{def mR geq 5}
and $C$ is a universal constant.
\end{lemma}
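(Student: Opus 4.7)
The plan is to verify each of the four bounds by direct computation from the explicit formulas already derived, without any further transformations. Everything reduces to Cauchy--Schwarz in Fourier coefficients together with a handful of elementary scalar inequalities, so I do not expect any serious obstacle.

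For $\mB_3$: since $z = \overline{w}$ on the c.c.\ subspace, Cauchy--Schwarz in Fourier gives $|\la \Lm w,\Lm w\ra| = |\sum_j |j|^2 w_j w_{-j}| \leq \|w\|_1^2$ and likewise for $\la \Lm z,\Lm z\ra$. The scalar prefactor in \eqref{def mB} is thus at most $\tfrac12 \|w\|_1^2$; being $x$-independent it commutes with $\Lm^s$, so multiplying by the vector $(z,w)$, whose c.c.\ norm equals $\|w\|_s$, yields the stated inequality. For $X_3^+$, formulas \eqref{X3+ 1}--\eqref{X3+ 2} give
\[
\|(X_3^+)_1\|_s^2 = \frac{1}{16}\sum_{k\neq 0} |k|^{2s}|z_k|^2 \Big|\sum_{j\neq 0,\,|j|=|k|} |j|^2 w_j w_{-j}\Big|^2,
\]
whose inner sum is $\leq \|w\|_1^2$ by Cauchy--Schwarz; the second component is symmetric.

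For $\mP$, I would first observe that $P(\eta,\psi) = \ph(Q(\eta,\psi))$ depends only on $t$, hence so does $\mP(w,z)$, which therefore commutes with $\Lm^s$ and satisfies $\|\mP h\|_s = \mP\|h\|_s$; nonnegativity follows from $P\geq 0$. The size bound rests on three elementary scalar facts: $\sqrt{1+2x}-1 \leq x$ for $x \geq 0$; $\ph(y) \leq y$ for $y \geq 0$ (since its inverse $x\mapsto x\sqrt{1+2x}\geq x$); and $Q(\eta,\overline{\eta}) = \int_{\T^d}(\Lm^{1/2}\Re\,\eta)^2\,dx \leq \|\eta\|_{1/2}^2$. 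Applying these with $(\eta,\psi) = \Phi^{(4)}(w,z)$, and using \eqref{stima M} to obtain $\|\eta\|_{1/2}\leq 2\|w\|_{1/2}$ when $\|w\|_{m_0}\leq \tfrac12$, I conclude $0\leq \mP(w,z)\leq C\|w\|_{1/2}^2$.

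Finally, comparing \eqref{def mR geq 5} with \eqref{def mB} yields the pointwise algebraic identity
\[
\mR_{\geq 5}(w,z) = -\frac{2P(w,z)}{1+2P(w,z)}\,\mB_3(w,z),
\]
whose scalar prefactor has modulus $\leq 2P(w,z)$, producing the first inequality in \eqref{stima mR geq 5}. Combining this with the bound on $\mB_3$ from the first step and with $P(w,z) = \ph(Q(w,z)) \leq Q(w,z) \leq \|w\|_{1/2}^2$ (applied now to the pair $(w,z)$ itself, again via $z=\overline w$) yields the second. The only bookkeeping care, and the one place where a slip would cause trouble, is keeping straight whether $P$ is evaluated at $(w,z)$ directly or at $\Phi^{(4)}(w,z)$ in each formula.
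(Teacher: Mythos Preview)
Your proof is correct and follows the same route as the paper, which simply points to the relevant definitions \eqref{def mB}, \eqref{X3+ 1}--\eqref{X3+ 2}, \eqref{def mP}, \eqref{Q ph Q}, \eqref{def Q(u,v)}, \eqref{def ph}, \eqref{def Phi4}, \eqref{stima M} and leaves the direct verification to the reader. You have supplied exactly those verifications---including the scalar inequalities $\sqrt{1+2x}-1\leq x$ and $\ph(y)\leq y$, and the care in distinguishing $P(w,z)$ from $P(\Phi^{(4)}(w,z))$---so nothing is missing.
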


\begin{proof}
Estimate \eqref{stima mB3 X3+} follows from 
\eqref{def mB}, \eqref{X3+ 1}, \eqref{X3+ 2}. 
To prove \eqref{stima mP} and \eqref{stima mR geq 5}, 
recall \eqref{def mP}, \eqref{Q ph Q}, \eqref{def Q(u,v)}, \eqref{def ph}, 
\eqref{def Phi4}, \eqref{stima M}.
\end{proof}

\begin{lemma} 
For all $s \geq 0$, all $(w,z) \in H^s_0(\T^d, c.c.) \cap 
H^{m_0}_0(\T^d, c.c.)$ with $\| w \|_{m_0} \leq \frac12$, one has 
\begin{equation*} 
\| X_{\geq 5}^+ (w,z) \|_s 
\leq C \| w \|_1^2 \| w \|_{m_0}^2 \| w \|_s
\end{equation*}
where $C$ is a universal constant.
\end{lemma}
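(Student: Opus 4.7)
The plan is to estimate each of the five summands in \eqref{def X+ geq 5} separately, using exclusively the tame bounds already established: \eqref{stima K} and its Neumann-series consequence for $(I+K)^{-1}$, the trilinear bounds \eqref{stima mB3 X3+} on $\mB_3$ and $X_3^+$, the bounds \eqref{stima mP} on the scalar multiplier $\mP$, and \eqref{stima mR geq 5} on $\mR_{\geq 5}$. A preliminary observation that I will use repeatedly is that, by \eqref{stima M} together with $\|w\|_{m_0}\leq \frac12$, the map $\Phi^{(4)}$ satisfies $\|\Phi^{(4)}(w,z)\|_r \leq 2\|w\|_r$ for every $r\geq 0$, so any quantity indexed by $(\eta,\psi)=\Phi^{(4)}(w,z)$ transfers into an estimate in the $(w,z)$ variables up to a universal constant. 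Also, since $\tfrac12\leq 1\leq m_0$, one has $\|w\|_{1/2}\leq \|w\|_1\leq \|w\|_{m_0}$, so whenever \eqref{stima mR geq 5} produces a factor $\|w\|_{1/2}^2$ it can freely be absorbed into $\|w\|_{m_0}^2$.

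First I would handle the ``algebraic'' terms $K(I+K)^{-1}(\mB_3-X_3^+)$ and $-\mP(I+K)^{-1}(\mB_3-X_3^+)$. By \eqref{stima mB3 X3+}, $\|\mB_3-X_3^+\|_s \leq \tfrac34 \|w\|_1^2 \|w\|_s$ and, at the lower threshold, $\|\mB_3-X_3^+\|_{m_0}\leq \tfrac34\|w\|_1^2\|w\|_{m_0}$. Feeding this into the tame bound on $(I+K)^{-1}$ gives $\|(I+K)^{-1}(\mB_3-X_3^+)\|_s \leq C\|w\|_1^2\|w\|_s$ (the ``second'' piece of the tame estimate contributes $\|w\|_{m_0}^2\cdot\|w\|_1^2\|w\|_s$, absorbed since $\|w\|_{m_0}\leq \frac12$). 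Applying $K$ once more via \eqref{stima K} picks up exactly the missing factor $\|w\|_{m_0}^2$: the first summand gives $\|w\|_{m_0}^2\cdot\|w\|_1^2\|w\|_s$, while the tame summand gives $\|w\|_{m_0}\|w\|_s\cdot\|w\|_1^2\|w\|_{m_0}$, both of the required form. Similarly, \eqref{stima mP} shows that $\mP$ is a scalar satisfying $0\leq \mP\leq C\|w\|_{m_0}^2$, so the second of these two terms obeys the same bound.

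Next I would estimate the three contributions making up $\mR_{\geq 5}^+$. For $(I+K)^{-1}\mR_{\geq 5}(\Phi^{(4)}(w,z))$ I apply \eqref{stima mR geq 5} at $(\eta,\psi)=\Phi^{(4)}(w,z)$ and use $\|\Phi^{(4)}\|_r\leq 2\|w\|_r$ to get $\|\mR_{\geq 5}(\Phi^{(4)})\|_s \leq C\|w\|_{1/2}^2\|w\|_1^2\|w\|_s \leq C\|w\|_1^2\|w\|_{m_0}^2\|w\|_s$; the tame bound on $(I+K)^{-1}$ then preserves this at the price of a universal constant. For $(-K+\tilde K)\mB_3(\Phi^{(4)})$ I use $\|\mB_3(\Phi^{(4)})\|_s\leq C\|w\|_1^2\|w\|_s$ from \eqref{stima mB3 X3+}, and \eqref{stima K} delivers the extra $\|w\|_{m_0}^2$; the tail $\tilde K=\sum_{n\geq 2}(-K)^n$ is even smaller by geometric series.

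The one step requiring slightly more care is $\mB_3(\Phi^{(4)}(w,z)) - \mB_3(w,z)$, because naively bounding it by $\|\mB_3(\Phi^{(4)})\|_s + \|\mB_3(w,z)\|_s$ would yield only $\|w\|_1^2\|w\|_s$, losing the desired factor $\|w\|_{m_0}^2$. The idea is to treat $\mB_3$ as a symmetric trilinear form and write the difference as a telescoping sum of three trilinear expressions, each of which contains the factor $\Phi^{(4)}(w,z)-(w,z) = M(w,z)\binom{w}{z}$, bounded by $\tfrac{7}{16}\|w\|_{m_0}^2\|w\|_s$ in $H^s$ and by $\tfrac{7}{16}\|w\|_{m_0}^3$ in $H^{m_0}$ thanks to \eqref{stima M}. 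Distributing two $H^1$-factors and one $H^s$-factor onto the three slots in the worst possible way yields an upper bound $C\|w\|_1^2\|w\|_{m_0}^2\|w\|_s$ in every case. This is the only step where multilinearity, rather than a direct application of an earlier lemma, is essential; everything else is bookkeeping of already-proved tame estimates. Summing the five contributions concludes the proof.
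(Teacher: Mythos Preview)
Your proof is correct and follows exactly the approach the paper intends: the paper's own proof is the single line ``Use \eqref{def X+ geq 5} and Lemma \ref{lemma:elementary}'', and you have faithfully unpacked that reference by estimating each of the five summands with the tame bounds \eqref{stima M}, \eqref{stima K}, \eqref{stima mB3 X3+}, \eqref{stima mP}, \eqref{stima mR geq 5}. In particular, your telescoping argument for $\mB_3(\Phi^{(4)}(w,z))-\mB_3(w,z)$ is the correct way to extract the extra factor $\|w\|_{m_0}^2$, and is precisely the detail the paper suppresses.
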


\begin{proof}
Use \eqref{def X+ geq 5} and Lemma \ref{lemma:elementary}.
\end{proof}

As a consequence, we obtain the following improved energy estimate.

\begin{lemma} \label{lemma:brand new}
Let $T > 0$, $s \geq m_0$. 
Any solution $(w , \overline{w}) \in C^0([0,T], H^s_0 (\T^d, c.c.))$
of the system $\pa_t (w , \overline{w}) = X^+ (w, \overline{w})$ 
satisfies 
\begin{equation} \label{en est X+}
\pa_t (\| w \|_s^2) \leq  C_* \| w \|_1^2 \| w \|_{m_0}^2 \| w \|_s^2
\end{equation}
as long as it remains in the ball $\| w \|_{m_0} \leq \frac12$, 
for some universal constant $C_* > 0$.
\end{lemma}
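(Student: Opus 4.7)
The plan is to apply the generic energy identity \eqref{generic en est} to $\mF = X^+$ and substitute the three-term decomposition \eqref{X+ ter}, namely $X^+ = (1+\mP)\mD_1 + X_3^+ + X_{\geq 5}^+$. Then $\frac12\pa_t(\|w\|_s^2)$ splits into three pieces, one for each summand, and I treat them in turn.

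For the diagonal summand, the coefficient $1+\mP(w,z)$ is a real-valued function of $t$ only (by its definition \eqref{def mP} together with \eqref{Q ph Q}), so it commutes with $\Lm^s$ and pulls out of the pairing. The contribution reduces to $(1+\mP)\Re\la -i\Lm^{s+1}w, \Lm^s \overline{w}\ra$, which vanishes because $\la \Lm^{s+1}w, \Lm^s \overline{w}\ra = \sum_{j}|j|^{2s+1}|w_j|^2 \in \R$ and the factor $-i$ makes the whole expression purely imaginary. This is exactly the ``diagonal structure gives no contribution'' observation recorded at the start of Section \ref{sec:NF}. For the cubic summand, the computation \eqref{below is here} is the explicit Fourier identity $\la \Lm^s (X_3^+)_1,\Lm^s \overline{w}\ra + \la \Lm^s w,\Lm^s (X_3^+)_2\ra = 0$, and by \eqref{real vector field} the two terms are complex conjugates of one another, so twice the real part of either vanishes. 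This is where the resonant structure $|k|=|j|$ of the surviving cubic terms \eqref{X3+ 1}--\eqref{X3+ 2} is used: renaming $j\leftrightarrow k$ in the second sum and invoking $|j|=|k|$ makes the two sums cancel.

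For the quintic remainder, I apply Cauchy--Schwarz together with the bound $\|X_{\geq 5}^+(w,z)\|_s \leq C\|w\|_1^2\|w\|_{m_0}^2\|w\|_s$ proved in the preceding lemma, obtaining
\[
\bigl|2\Re\la \Lm^s (X_{\geq 5}^+)_1, \Lm^s \overline{w}\ra\bigr| \leq 2\|X_{\geq 5}^+(w,z)\|_s\|w\|_s \leq 2C\|w\|_1^2\|w\|_{m_0}^2\|w\|_s^2,
\]
valid as long as $\|w\|_{m_0}\leq \frac12$. Setting $C_* := 2C$ and summing the three contributions yields \eqref{en est X+}.

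I do not expect a genuine obstacle in this final assembly: all the heavy lifting has already been done, namely (i) the diagonalization carried out in Section \ref{sec:Bubaru}, which makes the coefficient of the surviving unbounded diagonal operator $x$-independent, (ii) the normal-form choice \eqref{fix A12}--\eqref{fix C12} in Section \ref{sec:NF}, which removes precisely the non-resonant cubic modes and leaves only the resonant sphere $|k|=|j|$, and (iii) the preceding quintic bound on $X_{\geq 5}^+$. What the lemma really records is that these three facts together force the only surviving contribution to the growth of $\|w\|_s^2$ to be quartic in the low-regularity norm, which is the numerical origin of the exponent $4$ in the existence time of Theorem \ref{thm:main}.
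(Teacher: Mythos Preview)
Your proposal is correct and follows exactly the route the paper takes: the paper presents the lemma as an immediate consequence of the decomposition \eqref{X+ ter}, the vanishing of the diagonal contribution, the cancellation \eqref{below is here} for $X_3^+$, and the quintic bound on $X_{\geq 5}^+$ from the preceding lemma. You have in fact written out more detail than the paper does, since the paper simply states the lemma with the line ``As a consequence, we obtain the following improved energy estimate'' and no further proof.
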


\section{Proof of Theorem \ref{thm:main}}
\label{sec:proof}

We now perform the composition of all the changes of variables defined in the previous sections, namely we define
\[
\Phi = \Phi^{(1)} \circ \Phi^{(2)} \circ \Phi^{(3)} \circ \Phi^{(4)},
\]
where $\Phi^{(1)}$, $\Phi^{(2)}$, $\Phi^{(3)}$ and $\Phi^{(4)}$ have been defined in \eqref{1912.2}, \eqref{def Phi2}, \eqref{uv eta psi}, \eqref{def Phi3}, \eqref{def Phi4}. The definitions of $\Phi^{(1)}$ and $\Phi^{(2)}$, together with Lemma \ref{lemma:Phi3 inv} and Lemma \ref{lemma:Phi4 inv}, directly imply the following lemma. 

\begin{lemma}\label{lemma:compo inv}
There exist universal constants $\d_0 \in (0, \frac14)$, $C_0 >0$ such that, 
for all $s \geq m_0$ (where $m_0$ is defined in \eqref{def m0}), 
for all pairs of zero mean real functions 
$(u,v)\in H^{s+\frac12}_0 (\T^d, \R) \times H^{s-\frac12}_0 (\T^d, \R)$ 
satisfying
\begin{equation*} 
\| u \|_{m_0+\frac12} + \| v \|_{m_0-\frac12} \leq \d_0,
\end{equation*}
there exists a unique pair $(w,z)=(w, \overline{w})\in H^s_0(\T^d,c.c.)$ such that 
$(u,v)=\Phi(w, \overline{w})$. 
Moreover, $(w,\overline{w}) =\Phi^{-1}(u,v)$ satisfies the estimate
\begin{equation*} 
\| w \|_s \leq C_0 \big( \| u \|_{s+\frac12} + \| v \|_{s-\frac12} \big).
\end{equation*}
Conversely, if $w\in H^s_0(\T^d, \C)$ satisfies
\begin{equation*} 
\| w \|_{m_0} \leq \d_0,
\end{equation*}
then $(u,v)=\Phi(w,\overline{w})\in H^{s+\frac12}_0 (\T^d, \R) \times H^{s-\frac12}_0 (\T^d, \R)$ 
is a pair of zero mean real functions satisfying
\begin{equation*} 
\| u \|_{s+\frac12} + \| v \|_{s-\frac12} \leq C_0 \| w \|_s.
\end{equation*}
\end{lemma}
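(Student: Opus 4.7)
The plan is to prove both implications by unraveling the composition $\Phi = \Phi^{(1)} \circ \Phi^{(2)} \circ \Phi^{(3)} \circ \Phi^{(4)}$ step by step, using that $\Phi^{(1)}$ and $\Phi^{(2)}$ are explicit linear isomorphisms (Section \ref{sec:Lin}) and that $\Phi^{(3)}$ and $\Phi^{(4)}$ have already been inverted, with norm bounds, in Lemma \ref{lemma:Phi3 inv} and Lemma \ref{lemma:Phi4 inv} respectively. The only genuine task is to choose one universal constant $\d_0$ small enough that the smallness hypothesis $\| \eta \|_{m_0} \leq \frac14$ demanded by Lemma \ref{lemma:Phi4 inv} is automatically produced downstream; the rest is constant tracking.

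For the forward implication, starting from $(u,v) \in H^{s+\frac12}_0 \times H^{s-\frac12}_0$ real with $\| u \|_{m_0+\frac12} + \| v \|_{m_0 - \frac12} \leq \d_0$, I would first undo $\Phi^{(1)}$ by setting $q := \Lm^{\frac12} u$, $p := \Lm^{-\frac12} v$, which is well defined on zero-mean functions and gives $\| q \|_\s + \| p \|_\s = \| u \|_{\s+\frac12} + \| v \|_{\s-\frac12}$ for every $\s \geq 0$. Then I would undo $\Phi^{(2)}$ by setting $f := (q + ip)/\sqrt 2$, $g := (q - ip)/\sqrt 2 = \overline f$, so that $(f,g) \in H^s_0(\T^d, c.c.)$ with $\| f \|_\s \leq \| q \|_\s + \| p \|_\s$. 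Since $s \geq m_0 \geq 1 > \frac12$, Lemma \ref{lemma:Phi3 inv} applies and produces $(\eta, \psi) = (\Phi^{(3)})^{-1}(f,g) \in H^s_0(\T^d,c.c.)$ with $\| \eta \|_\s \leq C(\| f \|_{\frac12}) \| f \|_\s$ for all $\s \in [\frac12, s]$. Because the constant $C(\cdot)$ is increasing and $\| f \|_{\frac12} \leq \| f \|_{m_0} \lesssim \d_0$, I can fix $\d_0$ small enough (independent of $s$) that $\| \eta \|_{m_0} \leq \frac14$. Lemma \ref{lemma:Phi4 inv} then yields a unique $(w, \overline w) = (\Phi^{(4)})^{-1}(\eta, \psi)$ with $\| w \|_\s \leq 2 \| \eta \|_\s$ for all $\s \in \{ m_0, s \}$. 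Composing the four norm estimates gives $\| w \|_s \leq C_0 ( \| u \|_{s + \frac12} + \| v \|_{s - \frac12} )$ with a universal $C_0$, which is the desired bound.

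For the converse, given $w \in H^s_0$ with $\| w \|_{m_0} \leq \d_0 \leq \frac14$, I would run the four maps forward. By \eqref{stima M} one has $\| M(w,\overline w) (w,\overline w)^T \|_\s \leq \frac{7}{16} \| w \|_{m_0}^2 \| w \|_\s$ for $\s \in \{ m_0, s \}$, so $(\eta,\psi) := \Phi^{(4)}(w, \overline w)$ satisfies $\| \eta \|_\s \leq (1 + \frac{7}{16} \d_0^2) \| w \|_\s$ and $\psi = \overline \eta$ by Lemma \ref{lemma:3001.2}. Lemma \ref{lemma:Phi3 inv} then gives $(f,g) := \Phi^{(3)}(\eta,\psi) \in H^s_0(\T^d,c.c.)$, again with $\| f \|_\s \leq C(\| \eta \|_{\frac12}) \| \eta \|_\s$ and $g = \overline f$. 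The complex-to-real map $(q,p) := ((f+g)/\sqrt 2, (f-g)/(i\sqrt 2))$ produces a real pair with $\| q \|_\s + \| p \|_\s \lesssim \| f \|_\s$, and finally $(u,v) := (\Lm^{-\frac12} q, \Lm^{\frac12} p) \in H^{s+\frac12}_0 \times H^{s-\frac12}_0$ is real with $\| u \|_{s+\frac12} + \| v \|_{s-\frac12} \leq C_0 \| w \|_s$ after enlarging $C_0$ if necessary.

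The main obstacle, such as it is, is not analytic but bookkeeping: one must pick the single $\d_0$ so that it simultaneously (i) lies within the ball of Lemma \ref{lemma:Phi4 inv}, (ii) ensures that the increasing factor $C( \| f \|_{\frac12} )$ from Lemma \ref{lemma:Phi3 inv} is bounded by some explicit constant along the chain, and (iii) makes the output $\eta$ of step (3) land in the ball $\| \eta \|_{m_0} \leq \frac14$ of Lemma \ref{lemma:Phi4 inv}. Since all bounds involved are monotone and continuous in the data at the threshold $H^{m_0+\frac12} \times H^{m_0 - \frac12}$, a single choice of $\d_0$ works uniformly in $s \geq m_0$, and the estimates combine into one universal constant $C_0$, completing the proof.
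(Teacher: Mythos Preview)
Your proof is correct and follows exactly the approach the paper intends: the paper itself gives no detailed argument, stating only that the lemma ``directly'' follows from the definitions of $\Phi^{(1)}$, $\Phi^{(2)}$ together with Lemma \ref{lemma:Phi3 inv} and Lemma \ref{lemma:Phi4 inv}. Your write-up is a faithful and accurate elaboration of that one-line proof, including the correct bookkeeping to choose a single universal $\d_0$ making $\|\eta\|_{m_0} \leq \tfrac14$ downstream.
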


As a consequence, in the following corollary we deduce 
the equivalence of the Kirchhoff equation \eqref{K1}
and the transformed system \eqref{def X+}.

\begin{corollary} \label{cor:cor}
Let $\d_0,C_0 >0$ be given by Lemma \ref{lemma:compo inv}. 
Then, for all $s\geq m_0$, if $u\in C^0([0,T], H^{s+\frac12}_0 (\T^d, \R)) 
\cap C^1([0,T], H^{s-\frac12}_0 (\T^d, \R))$ 
is a solution of equation \eqref{K1} on some time interval $[0,T]$ with 
\begin{equation*} 
\max_{t \in [0,T]} \big( \| u (t) \|_{m_0+\frac12} 
+ \| \pa_t u (t)  \|_{m_0-\frac12} \big) \leq \d_0,
\end{equation*}
then the pair $(w,z)=(w, \overline{w})\in C^0 ([0,T], H^s_0(\T^d, c.c.))$ 
defined as $(w(t), \overline{w}(t)) = \Phi^{-1} (u(t),\pa_t u(t))$ 
is a solution of system \eqref{def X+}, satisfying
\begin{equation*} 
\max_{t \in [0,T]} \| w(t) \|_s \leq C_0 \max_{t \in [0,T]} 
\big( \| u (t) \|_{s+\frac12} + \| \pa_t u (t)  \|_{s-\frac12} \big).
\end{equation*}
Conversely, if $(w,\overline{w})\in C^0 ([0,T], H^s_0(\T^d, c.c.))$ 
is a solution of system \eqref{def X+} satisfying
\begin{equation*} 
\max_{t \in [0,T]} \| w(t) \|_{m_0} \leq \d_0,
\end{equation*}
then the pair of real functions $(u,v)\in C^0([0,T], H^{s+\frac12}_0 (\T^d,\R)) 
\times C^0([0,T], H^{s-\frac12}_0 (\T^d, \R))$ 
defined as $(u(t),v(t))=\Phi(w(t),\overline{w}(t))$ satisfies $v=\pa_t u$
and $u\in C^0([0,T], H^{s+\frac12}_0 (\T^d)) \cap C^1([0,T], H^{s-\frac12}_0 (\T^d))$ 
is a solution of equation \eqref{K1} satisfying
\begin{equation*}
\max_{t \in [0,T]} \big( \| u (t) \|_{s+\frac12} + \| \pa_t u(t)  \|_{s-\frac12} \big) 
\leq C_0 \max_{t \in [0,T]} \| w(t) \|_s.
\end{equation*}
\end{corollary}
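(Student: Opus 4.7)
The plan is to chain together the conjugacy relations already derived section by section and invoke Lemma \ref{lemma:compo inv} for the quantitative bounds; no new analytic input is required. First I would rewrite \eqref{K1} as the first-order Hamiltonian system \eqref{1912.1} in the variables $(u,v)=(u,\pa_t u)$: the hypothesis $u\in C^0([0,T], H^{s+\frac12}_0) \cap C^1([0,T], H^{s-\frac12}_0)$ is precisely the assertion that $(u,\pa_t u)$ is a $C^0$-in-time curve in $H^{s+\frac12}_0 \times H^{s-\frac12}_0$, and the second line of \eqref{1912.1} reproduces \eqref{K1} after substituting $v=\pa_t u$.

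Next, since each of the four transformations $\Phi^{(1)},\Phi^{(2)},\Phi^{(3)},\Phi^{(4)}$ was constructed in its respective section as an explicit conjugation of the corresponding systems, the composition $\Phi$ conjugates \eqref{1912.1} to \eqref{def X+}. Continuity and invertibility of the intermediate maps on the appropriate spaces are already established: $\Phi^{(1)},\Phi^{(2)}$ are linear topological isomorphisms, $\Phi^{(3)}$ is handled by Lemma \ref{lemma:Phi3 inv}, and $\Phi^{(4)}$ by Lemma \ref{lemma:Phi4 inv}. Consequently, $C^0$-in-time regularity is preserved under composition and inversion, and the equivalence of the two Cauchy problems reduces to the pointwise-in-time conjugation of vector fields, which was verified step by step. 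The norm bounds in the corollary are then Lemma \ref{lemma:compo inv} applied at each $t$.

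The only bookkeeping is the choice of a single universal $\d_0 \in (0,\frac14)$ ensuring that, in the forward direction, $\|u\|_{m_0+\frac12}+\|\pa_t u\|_{m_0-\frac12}\leq\d_0$ forces the intermediate $(\eta,\psi)$ into the ball $\{\|\eta\|_{m_0}\leq \frac14\}$ where Lemma \ref{lemma:Phi4 inv} applies, and, in the reverse direction, that $\|w\|_{m_0}\leq\d_0$ forces $(\eta,\psi)=\Phi^{(4)}(w,\overline w)$ into the domain of $(\Phi^{(3)})^{-1}$; this is routine from the quantitative estimates in those two lemmas, and $C_0$ is then the product of the Lipschitz constants accumulated along the four steps. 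One point deserving a brief verification is that the component $v$ agrees with $\pa_t u$ after inversion in the converse implication, but this is automatic: $\Phi^{(1)}=\diag(\Lm^{-\frac12},\Lm^{\frac12})$ is a constant-coefficient linear map, and $\Phi^{(2)},\Phi^{(3)},\Phi^{(4)}$ involve no time derivatives, so the first-order-in-time structure is transported intact. I do not foresee any genuine obstacle, as every ingredient has been prepared in the previous sections.
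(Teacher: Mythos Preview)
Your proposal is correct and matches the paper's approach: the paper does not give a separate proof of the corollary at all, presenting it as an immediate consequence of Lemma \ref{lemma:compo inv} together with the section-by-section conjugation of vector fields already carried out for $\Phi^{(1)},\ldots,\Phi^{(4)}$. You have simply spelled out the routine bookkeeping that the paper leaves implicit.
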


By a repeated use of Lemma \ref{lemma:compo inv} and Corollary \ref{cor:cor}
we prove Theorem \ref{thm:main}.

\begin{proof}[Proof of Theorem \ref{thm:main}]
The classical local existence and uniqueness theory 
for the Kirchhoff equation \eqref{K1} (see \cite{Arosio Panizzi 1996})
and Corollary \ref{cor:cor} imply the local existence and uniqueness for system \eqref{def X+}, 
for every initial data $(w_0, \overline{w_0})$ in the ball $\| w_0 \|_{m_0} \leq \d_0$. 

Let $(\a, \b) \in H^{m_0 + \frac12}_0(\T^d, \R) \times H^{m_0 - \frac12}_0(\T^d, \R)$
with 
\[
\e := \| \a \|_{m_0 + \frac12} + \| \b \|_{m_0 - \frac12} \leq \e_0 := \frac{\d_0}{2 C_0}.
\]
Let $(w_0, \overline{w_0}) := \Phi^{-1}(\a,\b)$. 
By Lemma \ref{lemma:compo inv}, one has 
$\| w_0 \|_{m_0} \leq C_0 \e \leq \frac{\d_0}{2}$, 
and therefore the Cauchy problem for system \eqref{def X+} 
with initial data $(w_0, \overline{w_0})$ has a (unique) local solution 
$(w(t), \overline{w}(t))$, whose existence time can be extended as long as 
$w(t)$ remains in the ball $\| w \|_{m_0} \leq \d_0$. 
By Lemma \ref{lemma:brand new}, 
\[
\pa_t (\| w(t) \|_{m_0}^2) \leq C_* \| w(t) \|_{m_0}^6.
\]
Hence 
\[
\| w(t) \|_{m_0} 
\leq \frac{\| w_0 \|_{m_0}}{(1 - 2 C_* \| w_0 \|_{m_0}^4 t)^{\frac14}} 
\leq 2 \| w_0 \|_{m_0} \leq \d_0
\]
for all $t \in [0,T]$, with
\[
T := \frac{C_1}{\e^4}, \qquad C_1 := \frac{15}{32 C_* C_0^4}.
\] 
Then $(u,v) := \Phi(w, \overline{w})$ belongs to 
$C^0([0,T], H^{m_0 + \frac12}_0(\T^d,\R)) \times C^0([0,T], H^{m_0 - \frac12}_0(\T^d,\R))$ 
and solves \eqref{p1}, so that 
$u \in C^0([0,T], H^{m_0 + \frac12}_0(\T^d,\R)) \cap C^1([0,T], H^{m_0 - \frac12}_0(\T^d,\R))$ 
solves \eqref{K1} with initial data $(\a,\b)$, and 
$\| u(t) \|_{m_0 + \frac12} + \| \pa_t u(t) \|_{m_0 - \frac12} \leq 2 C_0^2 \e$ 
for all $t \in [0,T]$. 

If, in addition, $(\a,\b) \in H^{s+\frac12}_0(\T^d,\R) \times H^{s-\frac12}_0(\T^d,\R)$ 
for some $s \geq m_0$, then $w_0 \in H^s_0(\T^d,\C)$ and, by Lemma \ref{lemma:brand new}, 
\[
|\pa_t (\| w(t) \|_s^2)| \leq C_* \| w(t) \|_{m_0}^4 \| w(t) \|_s^2
\leq C_* (2 \| w_0 \|_{m_0})^4 \| w(t) \|_s^2 
\]
for all $t \in [0,T]$. Hence  
\[
\| w(t) \|_s \leq \| w_0 \|_s \exp(8 C_* \| w_0 \|_{m_0}^4 t),
\]
whence 
\[
\| u(t) \|_{s+\frac12} + \| \pa_t u(t) \|_{s-\frac12}
\leq C_0^2 e^{\frac{15}{4}} (\| \a \|_{s+\frac12} + \| \b \|_{s-\frac12})
\]
for all $t \in [0,T]$. 
The proof of Theorem \ref{thm:main} is complete.
\end{proof}

\begin{footnotesize}


\noindent
Pietro Baldi, Emanuele Haus 

\noindent
Dipartimento di Matematica e Applicazioni ``R. Caccioppoli'', 
Universit\`a di Napoli Federico II, 
Via Cintia, Monte S. Angelo, 
80126 Napoli, Italy

\noindent
\texttt{pietro.baldi@unina.it}, 
\texttt{emanuele.haus@unina.it}
\end{footnotesize}


%
%
%
%
%
%
%
%
%

\end{document}